\documentclass[12pt, reqno]{amsart}
\usepackage{amsmath,times,epsfig,amssymb,amsbsy,amscd,amsfonts,amstext,color,bm,mathrsfs}
\usepackage[arrow,matrix]{xy}
\usepackage{hyperref}

\usepackage{bm}

\usepackage{tikz}

\numberwithin{equation}{section}

\theoremstyle{plain}
\newtheorem{theorem}{Theorem}[section]
\newtheorem{corollary}[theorem]{Corollary}
\newtheorem{lemma}[theorem]{Lemma}

\newtheorem{conjecture}[theorem]{Conjecture}

\theoremstyle{definition}
\newtheorem{definition}[theorem]{Definition}
\newtheorem{example}[theorem]{Example}

\theoremstyle{remark}
\newtheorem{remark}[theorem]{Remark}

\newcommand{\A}{\mathcal{A}}

\newcommand{\D}{\mathcal{D}}

\begin{document}

\title{The separator-based derivations for arrangements of monomial groups}

\begin{abstract}
We investigate derivation modules of hyperplane arrangements associated with the monomial groups $G(r,p,\ell)$. Motivated by M\"uhlherr's separator-based construction for graphic arrangements, we extend this approach by combining separators with integral expressions. For a subarrangement $\A_G$ defined by a graph $G$, we show that, when $p<r$, the derivation module $\D(\A_G)$ is generated by the standard derivations $\eta_0,\dots,\eta_{\kappa(G)}$ together with a finite set of separator-based derivations. This result generalizes the complete graph case and provides new evidence toward the broader applicability of separator methods in the study of hyperplane arrangements.
\end{abstract}

\author{Zixuan Wang}
\address{Zixuan Wang, 
Center for University Mathematics Teaching,
College of Science,
Nanjing Forestry University,
Longpan Road 159,
Nanjing, 210037, China
}
\email{zixuan.wang@njfu.edu.cn}

\author{Weili Guo}
\address{Weili Guo, 
Department of Mathematics, College of Mathematics and Physics, Beijing University of Chemical Technology,
North Third Ring East Road 15,
Beijing, 100029, China
}
\email{guowl@mail.buct.edu.cn }


\subjclass[2020]{Primary 52C35, Secondary 20F55, 13N15}
\keywords{Hyperplane arrangements, Monomial groups, Separator-based derivations}


\maketitle


\section{Introduction}
\label{sec:intro}

Let $V$ be an $\ell$-dimensional vector space over a field $\mathbb{K}$.
Denote by $S$ a polynomial ring in variables $x_1, x_2, \dots, x_\ell$.
Let the hyperplane $H \subseteq V$ be an affine subspace of dimension $(\ell-1)$,
which can be expressed as the kernel of a polynomial $\alpha_H$ of degree 1.
Let the hyperplane arrangement $\A$ be a finite set of hyperplanes.
The product $Q(\A)=\prod_{H\in \A} \alpha_H$ is called a defining polynomial of $\A$.
If $Q(\A)$ is a homogeneous polynomial, we say $\A$ is central. 

Let $\partial_i = \frac{\partial}{\partial_{x_i}}$,
and 
define $\mathrm{Der}(S)=\bigoplus_{i=1}^{\ell} S \partial_i$ to be an $S$-module.
The derivation module $\D(\A)$ of a central arrangement $\A$ is defined by
\begin{equation*}
	\D(\A)=\{ \theta \in \mathrm{Der}(S) \mid \theta(\alpha_H) \in \alpha_H S, \text{for \ any \ } H \in \A \}.
\end{equation*}
If $\D(\A)$ is a free $S$-module, then $\A$ is free.
Research on freeness of hyperplane arrangements originates from Saito \cite{saito},
who proved that Coxeter arrangements are free.
It is well known that freeness is a rare property,
and considerable effort has been devoted to classifying arrangements in terms of freeness.
Constructing a minimal set of generators for $\D(\A)$ 
and then 
verifying whether it is a basis by Saito's criterion (Theorem 4.19, \cite{ot})
is
a research method to determine whether $\A$ is free.

A particularly well-studied class of arrangements arises from graphs.
Given a simple graph $G=(V, E)$
with the vertex set $V=\{1,2, \dots, \ell \}$ and the edge set $E \subseteq \bigl\{ \{i , j \} \mid i, j \in V \bigr\}$, 
the graphic arrangement is defined by
$\prod_{ \{i,j\} \in E} (x_i-x_j) $, which is a subarrangement of the Coxeter arrangement of type $A$.
Stanley, Edelman, and Reiner \cite{er, stanley} classified free graphic arrangements:
a graphic arrangement is free if and only the graph $G$ is a chordal graph.
Later, Tran and Tsujie \cite{tt-mat} gave a precise characterization of the so-called MAT-freeness of graphic arrangements,
where MAT-freeness was introduced by Cuntz and M\"ucksch \cite{cm}
as a generalization of the Multiple Addition Theorem of Abe and Terao \cite{at}.
They proved that a graphic arrangement is MAT-free if and only if the graph $G$ is strongly chordal.

More recently, 
attention has turned to constructing generating sets for the derivation modules of graphic arrangements.
Abe, K\"uhne, M\"ucksch, and M\"uhlherr \cite{akmm} 
constructed a generating set for graphic arrangement 
related to the complement of the cycle graph $C_\ell$.
Furthermore, M\"uhlherr \cite{l} 
used the separator-based vector fields 
to obtain a finite generating set for general graphic arrangements,
where a subset $T$ of a vertex set $V$ of a connected graph $G$ is called a separator 
if the induced subgraph $G_{V \backslash T}$ is not connected.
An open question posed in \cite{l} is to 
construct more generators for larger classes of arrangements by this method.
In fact, 
it is difficult to construct a set of generators for $\D(\A)$ of a given arrangement $\A$, 
even when $\A$ is free.

A reflection arrangement is a finite set of reflecting hyperplanes 
that are the fixed subspaces of reflections in a reflection group. 
For more details, see Chapter 6 of \cite{ot}.
We now focus on arrangements associated with monomial reflection groups.
Following the standard description in \cite{FWY} and Chapter 6 in \cite{ot}, let
$V=\bigoplus_{i=1}^{\ell}\mathbb Ce_i$, let $r\geq2$, and let
$C_r$ be a cyclic group generated by $\zeta$, where $\zeta$ is a primitive $r$-th root of unity.
For a permutation $\sigma\in\mathfrak S_\ell$ and a map
$\varepsilon:[\ell]\to C_r$, define
$
\varphi_{\sigma,\varepsilon}(e_i)
=\varepsilon(i)e_{\sigma(i)}.
$
The collection of all such transformations is the full monomial group
$G(r,1,\ell)$. Its order is $r^\ell\ell!$.

Let $p$ be a divisor of $r$. The monomial group $G(r,p,\ell)$ is the
subgroup consisting of the transformations $\varphi_{\sigma,\varepsilon}$
such that
$
\left(\prod_{i=1}^{\ell}\varepsilon(i)\right)^{r/p}=1,$
or equivalently
$\prod_{i=1}^{\ell}\varepsilon(i)
\in\langle\zeta_r^p\rangle=C_{r/p}.
$
Thus $|G(r,p,\ell)|=r^\ell\ell!/p$. The assumption $p\mid r$ ensures
that $r/p$ is an integer and that $\langle\zeta_r^p\rangle$ is the
subgroup of $C_r$ of order $r/p$. 

Indeed, the reflections in $G(r,p,\ell)$ that are relevant here have two
forms. First, for every $p\mid r$, there are reflections of order two
which interchange $e_i$ and $e_j$ up to factors in $C_r$. Their
reflecting hyperplanes are
$
H_{ij}(\zeta)=\ker(x_i-\zeta x_j),
$
for
$
\ \zeta\in C_r,
$
and their defining forms contribute
$
\prod_{\zeta\in C_r}(x_i-\zeta x_j)=x_i^r-x_j^r.
$
Second, a diagonal transformation which multiplies one basis vector by
$\lambda\in C_{r/p}\setminus\{1\}$ and fixes the other basis vectors is
a reflection with reflecting hyperplane $H_i=\ker(x_i)$. Nontrivial
reflections of this second type exist precisely when $p<r$. 
If $p=r$, then $C_{r/p}=C_1=\{1\}$, so no
coordinate hyperplane occurs.

A simple reflection arrangement records the distinct
reflecting hyperplanes, but not the orders of the corresponding
reflections. Hence all groups $G(r,p,\ell)$ with $p<r$ have the same
reflection arrangement, although their group orders are different.  
The case
$p=r$ is different because the coordinate hyperplanes themselves are
absent.

It follows that 
the reflection arrangement $\A$ corresponding to monomial group 
$G(r, p, \ell)$
has defining polynomial
\begin{equation}
	Q(\A)=
	\begin{cases}
		x_1x_2\dots x_\ell\cdot\prod\limits_{1\leq i<j\leq\ell}(x_i^r-x_j^r), 
		&\mbox{ if }r>p, \\
		\prod\limits_{1\leq i<j\leq\ell}(x_i^r-x_j^r), &\mbox{ if }r=p. 
	\end{cases}
\end{equation}
When $r=2$, the above arrangements are of type $B$ or type $D$.
Hoge, Mano, R\"ohrle, and Stump \cite{hmrs} gave some free multiplicities on the above arrangements.
By integral expressions, the bases were given by Feigin, Wang, and Yoshinaga \cite{FWY}. 
In fact, freeness of subarrangements of $\A$ corresponding to $G(2,p,\ell)$ is widely studied by many researchers.
These classes of subarrangements are called signed graphic arrangements which are related to signed graphs.

\begin{definition}
	\label{def:AG}
	Given a simple graph $G$,
	define the subarrangement $\A_G$ of arrangements of type $G(r, p, \ell)$ by 
	\begin{equation}
		Q(\A_G)=
		\begin{cases}
			x_1x_2\dots x_\ell\cdot\prod\limits_{ \{i, j\} \in E(G) }(x_i^r-x_j^r), 
			&\mbox{ if }r>p, \\
			\prod\limits_{ \{i, j\} \in E(G) }(x_i^r-x_j^r), &\mbox{ if }r=p. 
		\end{cases}
	\end{equation}  
\end{definition}

For $r=2$,
Bailey \cite{b-pre} gave the conditions on graph $G$ to obtain some classes of free arrangements $\A_G$.
Torielli and Tsujie \cite{tt} extended Bailey's results to get more free subarrangements of $G(2, p, \ell)$.
The construction of generators for $\D(\A_G)$ 
of type $G(r, p, \ell)$, in both the free and non-free cases for $r\ge 2$, remains largely unexplored.
Since the derivation module $\D(\A_G)$ 
can be written as a direct sum of $\D(\A_C)$ for connected components $C$ of $G$,
we always consider the connected graph $G$ in this paper.

\begin{definition}
	\label{def:separator-based}
	Given a simple connected graph $G$, a separator $T$, a connected component $C$ in $G_{V \backslash T}$, and a polynomial $P\in S[y]$,
	define a vector field $\theta_C^{T, P}$ by
	\begin{equation}
		\theta_C^{T, P}=\sum_{i\in C} \prod_{u\in T} (x_i^r-x_u^r) \int_0^{x_i} P(t^r) dt \partial_i. 
	\end{equation}   
\end{definition}

Let $K_\ell$ be a complete graph. 
Note that $\D(\A_{K_\ell}) \subseteq \D(\A_{G})$ for arbitrary graph $G$,
and the basis of $\D(\A_{K_\ell})$ is well known.

\begin{theorem}
	\label{thm:basis for complete graph} 
	 $($$\mathrm{Proposition\ 6.77}$, \cite{ot}$)$
	Let $\eta_k= \sum_i \int_0^{x_i} t^{kr} dt \partial_i \doteq \sum_i x_i^{kr+1} \partial_i$. 
	The vector fields $\eta_0, \eta_1, \dots, \eta_{\ell-1}$ 
	form a basis for $\D(\A_{K_\ell})$ of type $G(r, p, \ell)$ where $r>p$. 
\end{theorem}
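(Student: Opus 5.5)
The plan is to use Saito's criterion (Theorem 4.19 in \cite{ot}). This requires three checks: that each $\eta_k$ lies in $\D(\A_{K_\ell})$, that the $\eta_k$ have the correct degree sum, and that their coefficient determinant is a nonzero constant multiple of $Q(\A_{K_\ell})$.

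First I check membership. Write $\eta_k=\sum_i x_i^{kr+1}\partial_i$; up to the constant $1/(kr+1)$ this is the integral expression in the statement.

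For a coordinate hyperplane $x_j=0$ we have $\eta_k(x_j)=x_j^{kr+1}\in x_jS$.

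For a hyperplane $x_i-\zeta x_j=0$ with $\zeta^r=1$ we compute
\[
\eta_k(x_i-\zeta x_j)=x_i^{kr+1}-\zeta x_j^{kr+1}.
\]
Since $\zeta^{kr+1}=\zeta$, this equals $x_i^{kr+1}-(\zeta x_j)^{kr+1}$, which is divisible by $x_i-\zeta x_j$. Hence every $\eta_k\in\D(\A_{K_\ell})$.

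Next I compute the determinant of the coefficient matrix $M=(x_i^{kr+1})_{i,\,0\le k\le \ell-1}$. Factoring $x_i$ out of row $i$ gives
\[
\det M=x_1\cdots x_\ell\cdot\det\bigl((x_i^r)^k\bigr),
\]
and the second factor is a Vandermonde determinant in the variables $x_i^r$. It equals $\pm\prod_{i<j}(x_i^r-x_j^r)$. So $\det M=\pm Q(\A_{K_\ell})$ in the case $r>p$, where the coordinate hyperplanes are present.

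The remaining condition is the degree count. The degrees are $\sum_{k=0}^{\ell-1}(kr+1)=\ell+r\binom{\ell}{2}$, and this equals $|\A_{K_\ell}|=\ell+r\binom{\ell}{2}$. The factors of $Q$ are distinct linear forms, so $\A_{K_\ell}$ is reduced. Saito's criterion then applies, and the $\eta_k$ form a basis.

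No step is a real obstacle. The only point needing care is the identity $\zeta^{kr+1}=\zeta$, which is what makes the divisibility work for every root of unity $\zeta$, not just for $\zeta=1$.
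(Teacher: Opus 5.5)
Your proof is correct. Membership via $\zeta^{kr+1}=\zeta$, together with $\det M = x_1\cdots x_\ell$ times a Vandermonde determinant in the $x_i^r$, is exactly the Saito's-criterion argument behind Proposition 6.77 of Orlik--Terao; the paper cites that result without giving its own proof. Your separate degree count is harmless but redundant: Theorem 4.19 only requires the determinant to be a nonzero constant multiple of $Q(\A_{K_\ell})$.
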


\begin{remark}
	\label{rem:reduction}
	We have $\eta_k\in \langle \eta_0, \eta_1, \dots, \eta_{\ell-1}  \rangle$ for $k \geq \ell$,
	since $\eta_k\in \D(\A_G)$. 
\end{remark}

The main theorem of the paper is as follows.

\begin{theorem}
	\label{thm:main}
	Let $\kappa(G)$ be the connectivity of graph $G$.
	The vector fields $\eta_0, \eta_1, \dots, \eta_{\kappa(G)}$ and a finite set of $\theta_C^{T, 1}$ form a generating set for arrangement $\A_G$ of type $G(r, p, \ell)$ where $p<r$. 
\end{theorem}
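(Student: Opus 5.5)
Our approach is to follow Mühlherr's strategy for graphic arrangements and adapt it to the coordinates $x_i^r$. Since $\eta_k$ and $\theta_C^{T,1}$ all lie in $\D(\A_G)$, only the spanning part needs proof. Let $\theta=\sum_i f_i\partial_i\in\D(\A_G)$ be homogeneous.

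\emph{Step 1 (coordinate normal form).} Because $x_i\mid f_i$, and the condition $\theta(x_i^r-x_j^r)\in(x_i^r-x_j^r)$ for $\{i,j\}\in E(G)$ reads $x_i^{r-1}f_i\equiv x_j^{r-1}f_j \pmod{x_i^r-x_j^r}$, we pass to the rescaled components $g_i:=x_i^{r-1}f_i$. These should be thought of as derivatives of $\int_0^{x_i}(\cdot)\,dt$-type expressions. We then average over the diagonal subgroup $C_r^\ell$ acting on $\theta$; this group preserves $\A_G$. Averaging reduces us to components of the form $f_i=x_i\,h_i(x_1^r,\dots,x_\ell^r)$, where the $h_i$ lie in the polynomial ring $R=\mathbb K[y_1,\dots,y_\ell]$ with $y_i=x_i^r$, up to a twist by monomials $x^a$. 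The twisted pieces are handled by the same argument with a fixed character, so we describe only the invariant case.

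\emph{Step 2 (reduction to a graphic problem).} The edge conditions become $h_i\equiv h_j \pmod{y_i-y_j}$. This is exactly the statement that $\sum_i h_i\partial_{y_i}$ lies in the derivation module of the graphic arrangement of $G$ in the $y$-variables. Under this correspondence $y_i^k\partial_{y_i}$ summed over $i$ goes to $\eta_k$, up to the scalar $kr+1$. Likewise the separator field $\sum_{i\in C}\prod_{u\in T}(y_i-y_u)\,\partial_{y_i}$ goes to $\theta_C^{T,1}$, since $\int_0^{x_i}1\,dt=x_i$.

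\emph{Step 3 (invoke the graphic result).} By Mühlherr's theorem \cite{l}, the graphic module is generated over $R$ by the fields $\sum_i y_i^k\partial_{y_i}$ for $k\le\kappa(G)$ together with finitely many separator fields. We can see why $\kappa(G)$ is the cutoff: for $k>\kappa(G)$, choose a minimum separator $T$ with $|T|=\kappa(G)$. Then $\sum_{i}y_i^k\partial_{y_i}$ can be rewritten as $\sum_j\bigl(\prod_{u\in T}(y_j-y_u)\bigr)\cdots$, which is a combination of lower Euler-type fields and separator fields. Lifting the resulting $R$-linear relation through $y_i=x_i^r$ gives $\theta$ as an $S$-combination of the $\eta_k$ with $k\le\kappa(G)$ and the $\theta_C^{T,1}$. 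The hypothesis $p<r$ is used here: it guarantees $x_i\mid f_i$, so that every component carries the factor $x_i$.

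\emph{Main obstacle.} The hard part is the non-invariant isotypic pieces in Step 1. There $f_i=x_i^{1+a_i}h_i(y)$ with $0\le a_i<r$, and the congruence mod $x_i^r-x_j^r$ mixes $x_i^{a_i}$ with $x_j^{a_j}$, since $x_i^{a}\equiv$ a non-polynomial expression in $x_j$ unless $a_i=a_j$. We would first show that on each edge the condition forces either $a_i=a_j$ or that $h_i$ and $h_j$ are divisible by $y_i-y_j$. This makes $\theta$ a combination of fields of the form $x^{b}\cdot(\text{invariant field})$ plus fields vanishing on the relevant hyperplanes, and the latter again decompose along separators. If this bookkeeping becomes unwieldy, the fallback is to verify the generation degree by degree, comparing Hilbert series locally at each codimension-two flat.
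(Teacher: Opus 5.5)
Your route is genuinely different from the paper's and can be completed. As written, however, it fails at exactly the step you call the main obstacle, because that step is miscomputed.

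Under $x_j\mapsto\zeta_jx_j$, the field $x^b\partial_i$ transforms by the character $b-e_i$. This character involves every coordinate of $b$, not only $b_i$. Hence in the isotypic piece $\theta_a$ of character $a\in\{0,\dots,r-1\}^\ell$, each coefficient $f_i$ is a sum of monomials $x^b$ with $b\equiv a+e_i\pmod r$. Together with $x_i\mid f_i$ (this is where $p<r$ is really used, namely when $a_i=r-1$), this gives $f_i=x^a\,x_i\,h_i(x_1^r,\dots,x_\ell^r)$ with the \emph{same} monomial $x^a$ for every $i$. So $\theta_a=x^a\tilde\theta$ with $\tilde\theta=\sum_i x_ih_i(y)\partial_i$, and $\tilde\theta\in\D(\A_G)$ because $x^a$ is coprime to every $x_i-\zeta x_j$. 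Nothing mixes $x_i^{a_i}$ with $x_j^{a_j}$, and neither the edge dichotomy you planned to prove nor the Hilbert-series fallback is needed.

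In Step 2 you should also justify that the edge condition gives $h_i\equiv h_j$ modulo $y_i-y_j$ in $R$, not merely in $S$. The condition gives $x_i^r-x_j^r\mid h_i-h_j$ in $S$. Since $S$ is free over $R$ on the monomials $x^c$ with $0\le c_k<r$, divisibility between elements of $R$ descends to $R$.

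With these two repairs you obtain $\D(\A_G)=\bigoplus_a x^a\,\Phi\bigl(D_R(\A_G^{\mathrm{gr}})\bigr)\cong S\otimes_R D_R(\A_G^{\mathrm{gr}})$. Here $\A_G^{\mathrm{gr}}$ is the graphic arrangement of $G$ in the $y$-variables and $\Phi(\sum_ih_i\partial_{y_i})=\sum_ix_ih_i(x^r)\partial_i$. The theorem is then the image under $\Phi$ of M\"uhlherr's graphic theorem.

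The paper never passes to the graphic case. It reproves everything in the $x_i^r$-setting:
\begin{itemize}
\item Gr\"obner bases for $\langle x_v^r-x_i^r\mid i\in B_j\rangle$, with the intersection and colon-ideal lemmas;
\item an induction that deletes edges from $K_\ell$ and corrects a derivation of $\A_{G'}$ by fields $\theta^{T,P}_C$ into one of $\A_G$;
\item descending chains to reach $P=1$, and the separator identity to discard $\eta_k$ for $k>\kappa(G)$.
\end{itemize}
In effect it re-runs M\"uhlherr's argument with $x_i^r$ in place of $x_i$, and your isomorphism explains why that works.

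Your route is shorter and proves more. Every generating set of the graphic module transfers (for instance the descending-chain set), and so does freeness in both directions. Hence for $p<r$ the arrangement $\A_G$ is free if and only if $G$ is chordal.

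The price is that everything rests on the exact form of the graphic theorem you cite: the cutoff at $\kappa(G)$ and a finite set of $P=1$ separator fields. Your Step 3 aside does not supply this by itself. Rewriting $\sum_iy_i^k\partial_{y_i}$ for $k>\kappa(G)$ produces separator fields with $P=y^{k-\kappa(G)}$, and reducing those to $P=1$ is precisely the descending-chain lemma. The paper's proof is self-contained.
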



In this paper, we fix integers $r, p, \ell$. Although the graph $G$ may vary, every arrangement under consideration is a graph-related subarrangement $\mathcal A_G$ of the reflection arrangement of type $G(r,p,\ell)$, as defined in Definition \ref{def:AG}. 
The paper is organized as follows. 
In \S \ref{subsec:grobner}, we examine Gr\"obner bases for some ideals in $S$.
Then we observe a generating set for the intersection of these ideals.
Some basic concepts of graph theory are given in \S \ref{subsec:action}.
Later the action of $\theta_C^{T, P}$ at some polynomials divide the defining polynomial $Q(\A_G)$
is analyzed.
In \S \ref{sec:proof}, 
the main theorem \ref{thm:main} is proved.
Finally, a conjecture concerning the arrangement $\A_G$ of type $G(r, r, \ell)$ is posed.

\section{The characterization of $\theta_C^{T, P}$}
\subsection{A Gr\"obner basis}\label{subsec:grobner}
A Gr\"obner basis yields many important properties of a generating set of an ideal,
and it is the tool we use below.

Fix an index $v\in\{1, 2, \dots, \ell \}$.
For $1\le j \le m$, define the subset $B_j=\{v_1^j, v_2^j, \dots, v_{n_j}^j\} \subseteq \{1, 2, \dots, \ell \}$, 
and the ideal $I_j=\langle x_v^r-x_i^r \mid i\in B_j,\ v\neq i \rangle$. 
Let
$
I=\bigcap_{j=1}^{m} I_j .
$
Our aim in this subsection is to make an explicit set of generators for the
intersection $I$ (Lemma~\ref{lem:z_new}) and to prove the statement
$I\cap\langle x_v^k\rangle=I\cdot\langle x_v^k\rangle$
(Lemma~\ref{lem:xvf}).
Up to this point no graph is involved, 
so $B_j$ may simply be regarded as a subset of the index set. 
It may be helpful to know in advance the situation in which
Lemma~\ref{lem:z_new} and Lemma~\ref{lem:xvf} will be used: 
$B_j$ will be
the set of internal vertices of a path from $v$ to $w$ in a graph $G$
with $\{v,w\}\notin E(G)$, listed in the order in which they occur along
that path.
Distinct sets $B_j$
correspond to distinct paths from $v$ to $w$.

Introduce the term order $\preceq$ on $S$ such that $x_v \prec x_i$ for $i\ne v$.
By $S$-polynomials and Buchberger's algorithm, we can check 
$F_j=\{ x_v^r-x_i^r \mid i\in B_j,\ v\neq i \}$ is a Gr\"obner basis for the ideal $I_j$. 
Set $f_i=x_v^r-x_i^r$. We have the leading power product $lp(f_i)=x_i^r$, the leading term $lt(f_i)=-x_i^r$,
and $ L_{i,j}=lcm\bigl(lp(f_i), lp(f_j)\bigr)=lcm(x_i^r, x_j^r)=x_i^r x_j^r$.
The $S$-polynomial of $f_i, f_j$ is 
\begin{equation*}
	S(f_i, f_j)=\frac{ L_{i,j}}{lt(f_i)} f_i-\frac{ L_{i,j}}{lt(f_j)} f_j=x_v^r(x_i^r-x_j^r).
\end{equation*}
Since $x_v^r(x_v^r-x_j^r)=S(f_i, f_j)-\frac{x_v^r x_i^r}{lt(f_i)}f_i$ and $0=x_v^r(x_v^r-x_j^r)+\frac{x_v^r x_j^r}{lt(f_j)}f_j$,
the $S$-polynomial $S(f_i, f_j)$ reduces to $0$ modulo $f_i, f_j$.
In conclusion, $F_j$ is a Gr\"obner basis. See \cite{ap} for details.

Let $U \subseteq  \{1, 2, \dots, \ell \}$ be a subset such that $U\cap B_j \ne \emptyset$ for $1\le j \le m$. 
Fix $ v  \in \{1, 2, \dots, \ell \} $, define the set $Z_{ new }$ by
\begin{equation*}
	Z_{ \mathrm{new} }=\bigl\{ \prod_{u\in U} (x_v^r-x_u^r) \mid U \subseteq \{1,2,\dots,\ell\}, \ 
	U \cap B_j \neq \emptyset \ (1 \leq j \leq m) \bigr\}.
\end{equation*}

The proof of the following lemma is similar to that of Lemma 3.1 in \cite{l} and Theorem 2.3 in \cite{sst}.

\begin{lemma}
	\label{lem:z_new}
	Let $I=\cap_{1\le j \le m} I_j$.
	The ideal generated by $Z_{ \mathrm{new} }$ is exactly the ideal $I$.
\end{lemma}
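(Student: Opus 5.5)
The plan is to reduce the statement to the classical description of an intersection of monomial prime ideals generated by variables. The reduction uses a change of variables in a suitable subring of $S$, followed by a flat base change back to $S$.

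First, the easy inclusion $\langle Z_{\mathrm{new}}\rangle\subseteq I$. Take $U$ with $U\cap B_j\neq\emptyset$ for all $j$, and fix $j$. Pick $u\in U\cap B_j$. If $u=v$, the factor $x_v^r-x_v^r$ vanishes and the product is $0\in I_j$. Otherwise the product $\prod_{u'\in U}(x_v^r-x_{u'}^r)$ contains the factor $f_u=x_v^r-x_u^r$, which is a generator of $I_j$. Hence every element of $Z_{\mathrm{new}}$ lies in every $I_j$, and so in $I$. We may therefore also discard indices equal to $v$ from the $B_j$ and from $U$ without changing anything.

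For the reverse inclusion, introduce the subring $A=\mathbb{K}[x_v,\ x_i^r\ (i\neq v)]\subseteq S$. This is a polynomial ring in the $\ell$ algebraically independent elements $x_v$ and $z_i:=x_i^r$. Moreover $S$ is a free $A$-module with basis the monomials $\prod_{i\neq v}x_i^{a_i}$, $0\le a_i<r$; in particular $S$ is flat over $A$. The substitution $w_i=x_v^r-z_i$ for $i\neq v$ gives a $\mathbb{K}$-algebra automorphism of $A$, so $A=\mathbb{K}[x_v,w_i\ (i\neq v)]$ is again a polynomial ring, now in the variables $x_v,w_i$. Each $I_j$ is the extension $J_jS$ of the ideal $J_j=\langle w_i\mid i\in B_j\rangle\subseteq A$, and $J_j$ is a monomial ideal in these new variables.

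The core step has two parts.
\begin{itemize}
\item Intersections of monomial ideals are monomial, spanned by the monomials lying in all of them. A monomial lies in $J_j$ iff it is divisible by some $w_i$ with $i\in B_j$.
\item Hence a monomial lies in $\bigcap_j J_j$ iff its support $U$ meets every $B_j$. Such a monomial is then divisible by the squarefree monomial $\prod_{u\in U}w_u$.
\end{itemize}
Thus $\bigcap_j J_j$ is generated by $\{\prod_{u\in U}w_u : U\cap B_j\neq\emptyset \ \forall j\}$, which is exactly $Z_{\mathrm{new}}$ read inside $A$.

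Finally, for a flat extension $A\to S$, finite intersections of ideals commute with extension: $(\bigcap_j J_j)S=\bigcap_j(J_jS)$. This follows by applying $-\otimes_AS$ to the exact sequence $0\to\bigcap_j J_j\to A\to\bigoplus_j A/J_j$. One can also argue concretely: write $f\in I$ uniquely as $\sum_a g_a x^a$ in the monomial basis with $g_a\in A$. Then $f\in J_jS$ iff every $g_a\in J_j$, because $J_jS=\bigoplus_a J_jx^a$. Therefore $I=(\bigcap_jJ_j)S=\langle Z_{\mathrm{new}}\rangle$.

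The main point requiring care is this last compatibility of intersection with extension to $S$, i.e.\ verifying $J_jS=\bigoplus_a J_jx^a$ via freeness of $S$ over $A$. Everything else is routine monomial-ideal combinatorics. The Gr\"obner basis computation for $F_j$ given above offers an alternative route, by checking normal forms, but the change of variables avoids it.
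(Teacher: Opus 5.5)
Your proof is correct, and it takes a genuinely different route from the paper.

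\textbf{The paper's route.} The paper works entirely inside $S$. It fixes a term order with $x_v$ smallest and checks by Buchberger's criterion that $F_j$ is a Gr\"obner basis of $I_j$. For $f\in I$, it reads off from $lt(f)$ one index $u_j\in B_j$ for each $j$. It then notes that $lt(g_U)=\pm\prod_{u\in U}x_u^r$ divides $lt(f)$ for the transversal $U=\{u_1,\dots,u_m\}$, and reduces repeatedly until the remainder vanishes.

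\textbf{Your route.} You observe that all generators lie in the subring $A=\mathbb{K}[x_v,x_i^r]$. After the triangular change of variables $w_i=x_v^r-x_i^r$, each $J_j$ is generated by variables. The claim then becomes the standard description of an intersection of such ideals by squarefree monomials indexed by transversals of the $B_j$. Freeness of $S$ over $A$, with basis $\prod x_i^{a_i}$ for $0\le a_i<r$, carries the intersection back to $S$.

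\textbf{Comparison.} The paper's argument actually gives slightly more: $Z_{\mathrm{new}}$ is a Gr\"obner basis of $I$ for that order. It also mirrors the proofs cited from \cite{l} and \cite{sst}. Your argument needs no term order and no $S$-polynomial check, and it shows directly that only minimal transversals $U$ are needed. It also yields Lemma~\ref{lem:xvf} almost for free. Since $x_v$ is a variable of $A$ that does not occur in any $w_u$, the squarefree monomial ideal $J=\bigcap_j J_j$ satisfies $(J:x_v^k)=J$. Colon ideals by elements of $A$ commute with the flat extension $A\to S$, as kernels of multiplication maps. Hence $(I:x_v^k)=I$ follows without a separate Gr\"obner reduction.
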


\begin{proof}
	It is clear that $\langle Z_{ \mathrm{new} } \rangle \subseteq I$.
	We prove the converse inclusion.
	Assume $f\in I$. 
	Since the elements of $F_j$ form a Gr\"obner basis for the ideal $I_j$,
	there exists $f_{u_j} \in F_j  $ 
	such that $lt(f_{u_j} ) = -x_{u_j}^r\mid lt(f) $ 
	for $1\le j \le m$.
	Let \(U=\{u_1,\ldots,u_m\}\), where repeated indices are discarded, and set
	$
	g_U=\prod_{u\in U}(x_v^r-x_u^r).
	$
	By construction, \(U\cap B_j\neq\varnothing\) for every \(j\), 
	so
	\(g_U\in Z_{\mathrm{new}}\).
	Since \(x_v\prec x_u\) for
	\(u\neq v\),
	$
	lt(g_U)=(-1)^{|U|}\prod_{u\in U}x_u^r.$
	The indices in \(U\) are distinct, so the monomials \(x_u^r\), \(u\in U\),
	involve distinct variables. Since each of them divides
	$lt(f)$, their product also divides
	$lt(f)$. Therefore,
	$
	lt(g_U)~\mid~lt(f).
	$
	By division, we have $f=h g_U +r$. Then $r\in I$, and $lt(r)\prec lt(f)$.
	We can apply the same process on $r$.
	Consequently, 
	the above reduction process terminates after finitely many
	steps. 
	Its final remainder must be zero.
	By the chosen $g_U\in Z_{ \mathrm{new} }$ at each step, 
	we get $f\in \langle Z_{ \mathrm{new} } \rangle$. Thus $\langle Z_{ \mathrm{new} } \rangle \supseteq I$.
\end{proof}

\begin{remark}
	\label{rem:from above lemma}
	Let $f\in S$, 
	and $k > 0$. 
	If $f\in I \cap \langle x_v^k \rangle$, 
	by an argument similar to the proof of the preceding lemma,
	$f$ is generated by $x_v^k \cdot \prod_{u\in U} (x_v^r-x_u^r)$ for some subsets $U$  
	such that $U\cap B_j \ne \emptyset,$ for $1\le j \le m$. 
\end{remark}

\begin{lemma}
	\label{lem:xvf}
	Let \(f\in S\), and let \(k>0\). If \(x_v^k f\in I\), then
	$
	f\in I=\langle Z_{ \mathrm{new} }\rangle.
	$
	In particular,
	$
	I\cap\langle x_v^k\rangle
	=I\cdot\langle x_v^k\rangle.
	$
\end{lemma}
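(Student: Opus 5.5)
The plan is to reduce to a single ideal $I_j$. Since $I=\bigcap_j I_j$, it suffices to show that each $I_j$ satisfies $x_v^k f\in I_j\Rightarrow f\in I_j$. In other words, $x_v$ should be a nonzerodivisor modulo each $I_j$, i.e.\ $I_j:x_v^k=I_j$. Once this holds for every $j$, we get $f\in\bigcap_j I_j=I$. Lemma~\ref{lem:z_new} then identifies $I$ with $\langle Z_{\mathrm{new}}\rangle$.

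To prove $I_j:x_v=I_j$, I would use the Gr\"obner basis $F_j$ and the term order with $x_v\prec x_i$ for $i\neq v$. Let $f$ be given and reduce it modulo $F_j$ to its normal form $\bar f$. Each leading power product $x_i^r$ of $F_j$ involves only $x_i$ with $i\in B_j$, $i\ne v$. Hence $\bar f$ is a combination of monomials in which every $x_i$ ($i\in B_j\setminus\{v\}$) appears with exponent $<r$. Multiplying by $x_v^k$ changes only the $x_v$-exponent of each monomial, so $x_v^k\bar f$ is again in normal form. Since $x_v^k f\equiv x_v^k\bar f \pmod{I_j}$ and $x_v^kf\in I_j$, the normal form $x_v^k\bar f$ must be $0$. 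Therefore $\bar f=0$ and $f\in I_j$.

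An alternative, more conceptual route is to make the substitution $x_i^r\mapsto x_v^r$ for $i\in B_j\setminus\{v\}$. This realizes $S/I_j$ as a free module over the polynomial ring in the remaining variables (including $x_v$), with basis $\prod x_i^{a_i}$, $0\le a_i<r$. In that description $x_v$ is clearly a nonzerodivisor.

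For the ``in particular'' statement, the inclusion $I\cdot\langle x_v^k\rangle\subseteq I\cap\langle x_v^k\rangle$ is trivial. Conversely, an element of $I\cap\langle x_v^k\rangle$ has the form $x_v^kf$ lying in $I$. By the first part $f\in I$, so $x_v^kf\in I\cdot\langle x_v^k\rangle$, consistent with Remark~\ref{rem:from above lemma}.

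The only point needing care is the normal form argument: multiplication by $x_v^k$ must preserve the property of being reduced. This holds because no leading term of $F_j$ involves $x_v$. That property is exactly where the choice of term order is used, so I expect it to be the main (though mild) obstacle.
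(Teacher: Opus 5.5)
Your proof is correct and follows essentially the same route as the paper. You show $(I_j:x_v^k)=I_j$ for each $j$ using the Gr\"obner basis $F_j$, whose leading terms $x_i^r$ never involve $x_v$, then intersect over $j$ and use $I\cap\langle x_v^k\rangle=x_v^k(I:x_v^k)$; your uniqueness-of-normal-form argument is just a cleaner packaging of the paper's repeated division step.
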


\begin{proof}
	We first prove that
	$
	(I_j:x_v^k)=I_j
	$
	for every \(1\leq j\leq m\). The inclusion
	$
	I_j\subseteq (I_j:x_v^k)
	$
	is immediate. To prove the converse inclusion, let \(0\neq h\in S\) and
	assume that \(x_v^k h\in I_j\). Since
	$
	F_j
	$
	is a Gr\"obner basis for \(I_j\), 
	there exists \(i \in B_j\), \(i \neq v\),
	such that
	$
	lt(f_{i})=-x_{i}^r
	\mid lt(x_v^k h)
	=x_v^k \ lt(h).
	$
	Then
	$
	-x_{i}^r\mid lt (h).
	$
	By division, we have $h=q f_i +r$. 
	Then either the remainder $r=0$ or $lt(r)\prec lt(h)$.
	Moreover,
	$
	x_v^k r
	=x_v^k h-x_v^k q f_{i}\in I_j.
	$
	If $r \neq 0$, we can repeat this process.
	Consequently, 
	the above reduction process terminates after finitely many
	steps. 
	Its final remainder must be zero.
	Hence
	$
	h=\sum_s q_sf_{i_s}\in I_j.
	$
	This proves
	$
	(I_j:x_v^k)=I_j.
	$
	
	Since \(I=\bigcap_{1\leq j\leq m}I_j\), we have
	$
		(I:x_v^k)
		=\left(\bigcap_{1\leq j\leq m}I_j:x_v^k\right)  
		=\bigcap_{1\leq j\leq m}(I_j~:~x_v^k)              
		=\bigcap_{1\leq j\leq m}I_j
		=I.
	$
	Therefore, if \(x_v^k f\in I\), then \(f\in I\). 
	
	Finally, for the principal ideal \(\langle x_v^k\rangle\), we have
	$
	I\cap\langle x_v^k\rangle
	=x_v^k(I:x_v^k).
	$
	Indeed, if \(f\in I\cap\langle x_v^k\rangle\), then \(f=x_v^k g\) for some
	\(g\in S\), and \(f\in I\) implies \(g\in(I:x_v^k)\); the reverse inclusion
	is immediate. 
	Since \((I:x_v^k)=I\), 
	we obtain
	$
	I\cap\langle x_v^k\rangle
	=x_v^kI
	=I\cdot\langle x_v^k\rangle.
	$
\end{proof}

\subsection{The action of $\theta_C^{T, P}$} \label{subsec:action}
In this section, we first introduce some useful concepts in graph theory based on Chapter 1 in \cite{d}.
Given a simple graph $G=(V, E)$, 
we define the neighborhood, induced subgraphs, paths, separators, and connectivity in turn.

\begin{definition}
Given a simple graph $G$,
define the neighborhood $N_G(v)$ of a vertex $v~\in~V $ by $N_G(v)= \bigl\{w \mid \{v, w\} \in E \bigr\}$. 
Define the neighborhood of $W\subseteq V$ by 
$N_G(W)= \bigl\{w \in V \backslash W \mid \{v, w\} \in E, v \in W \bigr\}.$  
\end{definition} 

\begin{definition}
	For $U\subseteq V$, a subgraph $G_U$ of $G$ induced by $U$ is
	$
	G_U
	=\bigl(U,\bigl\{\{i,j\}\in E\mid i,j\in U\bigr\}\bigr).
	$
	For $T\subseteq V$, write $G\backslash T=G_{V\setminus T}$.
\end{definition} 

\begin{definition}
	A path is a non-empty graph $B=(V_B, E_B)$ of the form
	$V_B=\{v_0, v_1, \dots, v_{n+1}\}$, 
	$E_B=\bigl\{ \{v_s, v_{s+1} \} \mid 0\leq s\leq n \bigr\}$, 
	where $v_s$ are all distinct.
\end{definition} 

We often refer to a path by the natural sequence of its vertices,
writing
$B=(v_0, v_1, v_2, \dots, v_n, v_{n+1})$
and 
calling $B$ a path from $v_0$ to $v_{n+1}$ (as well as between $v_0$ and $v_{n+1}$).

\begin{definition}
	Let $v,w\in V$ be distinct and let
	$T\subseteq V\setminus\{v,w\}$. The set $T$ is a
	$(v,w)$-separator if every path from $v$ to $w$ meets $T$.
\end{definition}

We say that $T\subseteq V$ separates $G$ if $T$ is a $(v, w)$-separator for some vertices $v,w\in V$. 
A $(v,w)$-separator $T$ is minimal if no proper subset of $T$ separates
$v,w$.
Define $\mathscr{T}(G)$ to be the set of all minimal separators of $G$,
and $\mathscr{C}(T)$ to be the set of vertex sets of all connected components of $G_{V\backslash T}$. 

\begin{definition}
	For $k\in\mathbb N$, the graph $G$ is \emph{$k$-connected} if
	$|V|>k$ and $G\backslash X$ is connected for every $X\subseteq V$ with
	$|X|<k$. The \emph{connectivity} $\kappa(G)$ is the greatest integer
	$k$ for which $G$ is $k$-connected. In particular,
	$\kappa(K_\ell)=\ell-1$.
\end{definition}

\begin{definition}
If $G$
is a connected graph, $T$ is a separator of $G$, and
\(D\in\mathscr C(T)\), we call \(D\) a \(T\)-full component if
$
N_G(D)=T.
$
\end{definition} 

It is well known that
the separator \(T\) is a minimal separator of $G=(V, E)$ if and only if \(G\setminus T\)
has at least two \(T\)-full components. 
Indeed, if \(T\) is a minimal
\((v, w)\)-separator, the components containing \(v\) and \(w\) are adjacent
to every vertex of \(T\). Conversely, if \(C_1\) and \(C_2\) are two
\(T\)-full components, then, for every \(t\in T\), there is a
path between $v$ and $w$ through \(t\) in
\(G\setminus(T\setminus\{t\})\). Thus no proper subset of \(T\) separates
$v$ and $w$. 


We now turn our attention to the vector field $\theta_C^{T,P}$ introduced in Definition \ref{def:separator-based}.

\begin{lemma}
	\label{lemma:derivation}
	The vector field $\theta_C^{T, P}$ is a derivation for $\A_G$.
\end{lemma}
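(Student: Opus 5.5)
We check the defining condition of $\D(\A_G)$ directly, one hyperplane of $\A_G$ at a time. Write $\theta=\theta_C^{T,P}=\sum_{i\in C} g_i\,\partial_i$ with
\begin{equation*}
g_i=\prod_{u\in T}(x_i^r-x_u^r)\,F(x_i),\qquad F(x)=\int_0^{x}P(t^r)\,dt .
\end{equation*}
Here $P(t^r)$ is a polynomial in $t$ whose coefficients lie in $S$, and integration is in $t$ only. Two facts about $F$ will be used. First, $F(x)$ is divisible by $x$, since the integral starts at $0$. Second, $F(\zeta x)=\zeta F(x)$ for every $\zeta\in C_r$: substituting $t=\zeta s$ gives $\int_0^{\zeta x}P(t^r)\,dt=\zeta\int_0^{x}P(s^r)\,ds$, because $(\zeta s)^r=s^r$. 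Equivalently, every monomial $t^{kr}$ integrates to $x^{kr+1}/(kr+1)$, which is $x$ times a polynomial in $x^r$, so $F(x)=x\,\Phi(x^r)$ for some polynomial $\Phi$. Since $p<r$, the hyperplanes are the coordinate hyperplanes $\ker x_i$ together with $\ker(x_i-\zeta x_j)$ for $\{i,j\}\in E$ and $\zeta\in C_r$.

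For the coordinate hyperplanes: $\theta(x_i)=0$ if $i\notin C$, and $\theta(x_i)=g_i$ if $i\in C$, which is divisible by $x_i$ because $F(x_i)$ is.

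For an edge $\{i,j\}\in E$ and $\zeta\in C_r$ we need $x_i-\zeta x_j \mid \theta(x_i-\zeta x_j)=\theta(x_i)-\zeta\theta(x_j)$. We split into cases by where $i$ and $j$ lie. Since $C$ is a connected component of $G_{V\setminus T}$, no edge joins $C$ to $V\setminus(C\cup T)$, so the cases are:

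\begin{enumerate}
\item \emph{Neither $i$ nor $j$ lies in $C$.} Then $\theta(x_i-\zeta x_j)=0$.
\item \emph{$i\in C$ and $j\in T$} (or symmetrically). Then $\theta(x_i-\zeta x_j)=g_i$, which contains the factor $x_i^r-x_j^r=\prod_{\xi\in C_r}(x_i-\xi x_j)$, hence is divisible by $x_i-\zeta x_j$.
\item \emph{$i,j\in C$.} Work modulo $x_i-\zeta x_j$, i.e.\ substitute $x_i=\zeta x_j$. Then $x_i^r=x_j^r$, so $\prod_{u\in T}(x_i^r-x_u^r)$ becomes $\prod_{u\in T}(x_j^r-x_u^r)$. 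Also $F(x_i)=F(\zeta x_j)=\zeta F(x_j)$, where the coefficients of $P$ in the remaining variables are unaffected by treating $P$ formally. Hence $g_i\equiv \zeta g_j$, so $\theta(x_i)-\zeta\theta(x_j)=g_i-\zeta g_j\equiv 0$.
\end{enumerate}

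The only subtle step is case 3, because the coefficients of $P$ may involve $x_i$ and $x_j$ themselves. We avoid any substitution inside the coefficients by using the identity $F(x)=x\Phi(x^r)$ with $\Phi$ having coefficients in $S$. Then
\begin{equation*}
g_i-\zeta g_j=\Bigl(\prod_{u\in T}(x_i^r-x_u^r)\,\Phi(x_i^r)\Bigr)x_i-\zeta x_j\prod_{u\in T}(x_j^r-x_u^r)\,\Phi(x_j^r).
\end{equation*}
Write $A(y)=\prod_{u\in T}(y-x_u^r)\,\Phi(y)$, so that $g_i-\zeta g_j=x_iA(x_i^r)-\zeta x_jA(x_j^r)$. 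Split this as
\begin{equation*}
x_iA(x_i^r)-\zeta x_jA(x_j^r)=(x_i-\zeta x_j)\,A(x_i^r)+\zeta x_j\bigl(A(x_i^r)-A(x_j^r)\bigr).
\end{equation*}
The first term is visibly divisible by $x_i-\zeta x_j$. The second is divisible by $x_i^r-x_j^r$, because $A(a)-A(b)$ is divisible by $a-b$ as polynomials in $a,b$ with coefficients in $S$; and $x_i^r-x_j^r$ is in turn divisible by $x_i-\zeta x_j$. This identity gives the divisibility honestly, so $\theta_C^{T,P}\in\D(\A_G)$.
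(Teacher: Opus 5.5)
Your proof is correct and follows essentially the same route as the paper. Both use the same case split by the position of $i,j$ relative to $C$ and $T$, the factor $x_i^r-x_j^r$ for edges between $C$ and $T$, and the equivariance $F(\zeta x)=\zeta F(x)$ (the paper's substitution $t'=\zeta t$) for edges inside $C$; your explicit identity $x_iA(x_i^r)-\zeta x_jA(x_j^r)=(x_i-\zeta x_j)A(x_i^r)+\zeta x_j\bigl(A(x_i^r)-A(x_j^r)\bigr)$ is a cleaner version of the paper's final step, which writes $\prod_{u\in T}(x_j^r-x_u^r)$ as $\prod_{u\in T}(x_i^r-x_u^r)$ with an equality sign even though this holds only modulo $x_i^r-x_j^r$.
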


\begin{proof}
	The arrangement $\A_G$ may consist of hyperplanes $x_i=0$, 
	and hyperplanes $x_i-\zeta x_j=0$ where $\zeta^r = 1$.
	We have $\theta_C^{T, P}(x_i)\in x_i S$, since $\int_0^{x_i} P(t^r) dt$ is divisible by $x_i$.
	It remains to prove $\theta_C^{T, P}(x_i-\zeta x_j)\in (x_i-\zeta x_j) S$ for every edges $\{i, j\}\in E(G)$.
	
	If $i, j\in T$, it is trivial. 
	
	If $i\in C$, and $j\in T$, we have $\theta_C^{T, P}(x_i-\zeta x_j)\in (x_i-\zeta x_j) S$, since $\prod_{u\in T} (x_i^r-x_u^r)$ is divisible by $(x_i-\zeta x_j)$.
	
	If $i, j\in C$, we have
	\begin{align*}		
			\theta_C^{T, P}(x_i-\zeta x_j)
			&= \prod_{u\in T} (x_i^r-x_u^r) \int_0^{x_i} P(t^r) dt-
			\zeta \prod_{u\in T} (x_j^r-x_u^r) \int_0^{x_j} P(t^r) dt \\
			&= \prod_{u\in T} (x_i^r-x_u^r) \int_0^{x_i} P(t^r) dt-
			\prod_{u\in T} (x_j^r-x_u^r) \int_0^{\zeta x_j} P((\zeta^{-1} t')^r) dt' \\
			&= \prod_{u\in T} (x_i^r-x_u^r) \int_0^{x_i} P(t^r) dt -
			\prod_{u\in T} (x_j^r-x_u^r) \int_0^{\zeta x_j} P((t')^r) dt' \\
			&=
			\prod_{u\in T} (x_i^r-x_u^r) \int_{\zeta x_j}^{x_i} P(t^r) dt  \\
			&\equiv 0 \qquad \qquad \qquad \qquad \qquad \mathrm{mod} (x_i-\zeta x_j)
		\end{align*}
	where $t'=\zeta t$.
\end{proof}

We say that $B_j=\{v_1^j, v_2^j, \dots, v_{n_j}^j\} \subseteq V(G)$ gives a path between $v, w$, 
if the ordering $v_1^j, v_2^j, \dots, v_{n_j}^j $ is fixed, 
and $( v=v_0^j, v_1^j, v_2^j, \dots, v_{n_j}^j, w=v_{n_j+1}^j  )$ is a path.
Proposition 3.2 in \cite{l} relates the derivation of graphic arrangements to the path of given graph.
Similarly, we consider the relation between $\theta \in \D(\A_G)$ 
and the path $B_j$, where $\{v, w\}\notin E(G)$, $n_j\ge 1$.
Since $\theta (x_i^r - x_j^r) \in (x_i^r - x_j^r) S$ for $\{i ,j\} \in E(G)$,
we have
\begin{align}
	\label{equ:xvr-xwr}
	\theta(x_v^r-x_w^r) & = \theta (x_v^r-x_{v_1^j}^r+x_{v_1^j}^r- \dots + x_{v_{n_j}^j}^r -x_w^r) \notag \\
	& = \sum_{i=0, 1, \dots, n_j} (x_{v_i^j}^r-x_{v_{i+1}^j}^r) f_{v_i^j,v_{i+1}^j} \notag \\
	& = \sum_{i=0, 1, \dots, n_j} (x_{v_i^j}^r -x_v^r +x_v^r - x_{v_{i+1}^j}^r) f_{v_i^j,v_{i+1}^j} \notag \\
	& = \sum_{i=1, \dots, n_j+1} (x_v^r-x_{v_i^j}^r) g_{v,v_i^j} \notag \\
	& \equiv \sum_{i=1, \dots, n_j} (x_v^r-x_{v_i^j}^r) g_{v,v_i^j} \qquad \mathrm{mod}(x_v-\zeta x_w)
\end{align}
where $f_{v_i^j,v_{i+1}^j}, g_{v,v_i^j} \in S$, and $\zeta ^r =1$. 
Thus $\overline{\theta(x_v^r-x_w^r)}\in I_j$ in $S/\langle x_v-\zeta x_w \rangle$.
If each $B_j$ $(j=1,2,\dots,m)$ gives a path between $v, w$, 
we have $\overline{\theta(x_v^r-x_w^r)}\in \cap_{1\le j \le m} I_j = I= \langle Z_{ \mathrm{new} } \rangle$ by Lemma \ref{lem:z_new}.

\section{Proof of main theorem}
\label{sec:proof}

We construct an infinite set of derivations of $\D(\A_G)$, 
then observe the relation between that of $\D(\A_G)$ and $\D(\A_{G'})$, 
where $G'= \bigl(V(G), E(G)\backslash \{v,w\}\bigr)$ for $ \{v, w\} \in E(G)$.
Note that $\A_G'=\A_G\backslash \{ x_v^r - x_w^r =0\}= \A_{G'}$.

\begin{definition}
	\label{def:generators}
	Let $\mathscr{G}(G)$ be a set consisting of vector fields 
	$\eta_0, \eta_1, \dots, \eta_{\ell-1}$, and
	$\theta_C^{T,P}$ for all minimal separators $T$ of $G$, $C\in \mathscr{C}(T)$, $P\in S[y]$.
\end{definition}

\begin{lemma}
	\label{lem:generators}
	Let \(G\) be a connected graph, let \(\{v,w\}\in E(G)\), and let
	$
	G'=(V(G),E(G)\setminus\{\{v,w\}\})
	$
	be connected. Let \(\mathscr G(G)\) and \(\mathscr G(G')\) be defined as
	above. Then
	$
	\mathscr G(G)\subseteq\langle\mathscr G(G')\rangle .
	$
\end{lemma}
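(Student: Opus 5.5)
The η's cause no difficulty, since $\eta_0,\dots,\eta_{\ell-1}$ belong to both $\mathscr G(G)$ and $\mathscr G(G')$. The real work is a statement stronger than needed: for \emph{every} subset $T\subseteq V$, every union $C$ of connected components of $G'\setminus T$, and every $P\in S[y]$, the field $\theta_C^{T,P}$ lies in $\langle\mathscr G(G')\rangle$. Here $\theta_C^{T,P}$ is given by the formula of Definition \ref{def:separator-based}, and $T$ need not be a separator. This suffices. Indeed, let $T$ be a minimal separator of $G$ and $C\in\mathscr C(T)$. Deleting the edge $\{v,w\}$ can only split components, so $C$ is a union of components of $G'\setminus T$.

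\textbf{Step 1 (additivity and removable vertices).} The definition is a sum over $i\in C$. Hence $\theta_{C_1\cup C_2}^{T,P}=\theta_{C_1}^{T,P}+\theta_{C_2}^{T,P}$ for disjoint $C_1,C_2$, and it is enough to treat a single component $C$ of $G'\setminus T$.

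For any $x$ we have $x^r\int_0^{x}P(t^r)\,dt=\int_0^{x}\widetilde P(t^r)\,dt$ for some $\widetilde P\in S[y]$. This holds because $\int_0^x P(t^r)dt=xR(x^r)$ and $x^{kr+1}=(kr+1)\int_0^x t^{kr}dt$; we work over $\mathbb C$, and the coefficients of $P$ are treated as constants.

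Now fix $u\in T$ and set $T'=T\setminus\{u\}$. Writing $x_i^r-x_u^r$ as two terms gives
\begin{equation*}
\theta_C^{T,P}=\theta_C^{T',\widetilde P}-x_u^r\,\theta_C^{T',P}.
\end{equation*}
This reduces $|T|$ whenever $C$ is still a union of components of $G'\setminus T'$. That is the case exactly when $u$ has no neighbour in $C$.

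\textbf{Step 2 (induction on $|T|$).} If $T=\emptyset$, then $C=V$ because $G'$ is connected. Expanding $P(t^r)=\sum_k c_k t^{kr}$ with $c_k\in S$ gives $\theta_V^{\emptyset,P}=\sum_k c_k\eta_k$, which lies in $\langle\eta_0,\dots,\eta_{\ell-1}\rangle$ by Remark \ref{rem:reduction}.

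For $T\neq\emptyset$ and $C$ a single component, there are three cases.
\begin{itemize}
\item Some $u\in T$ has no neighbour in $C$. Then Step 1 applies and we use induction.
\item $C$ is $T$-full and $G'\setminus T$ has another $T$-full component. Then $T$ is a minimal separator of $G'$ by the characterization recalled before Lemma \ref{lemma:derivation}, so $\theta_C^{T,P}\in\mathscr G(G')$ directly.
\item $C$ is the unique $T$-full component. The terms with $i\in T$ vanish, because the factor $x_i^r-x_i^r$ appears. Hence
\begin{equation*}
\theta_C^{T,P}=\theta_V^{T,P}-\sum_{D}\theta_D^{T,P},
\end{equation*}
where $D$ runs over the other, non-full, components. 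Each $\theta_D^{T,P}$ falls under the first case. For $\theta_V^{T,P}$ we apply the expansion of Step 1 repeatedly, since $V$ stays a union of components for every $T'$. This reduces it to $T=\emptyset$, hence to combinations of the $\eta_k$.
\end{itemize}
If $G'\setminus T$ has just one component, the last case applies with an empty sum.

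\textbf{Main obstacle.} I expect the delicate points to be two. The first is keeping the induction well-founded in the third case; it is, because each subcase either lowers $|T|$ or lands in the first case. The second is checking that the rewriting $x^r\int P=\int\widetilde P$ keeps $\widetilde P$ a polynomial in $t^r$ with coefficients in $S$. Both seem routine, and the argument needs no use of Lemmas \ref{lem:z_new} and \ref{lem:xvf}. Those lemmas are presumably needed later, for the reverse direction in the main theorem.
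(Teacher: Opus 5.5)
Your proof is correct, but it is organized differently from the paper's.

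\textbf{The paper's route.} The paper stays inside the edge-deletion setting. After splitting $\theta_C^{T,P}$ over the components of $G'\setminus T$, it uses two facts: $T$ is minimal in $G$, and deleting one edge can spoil at most one $T$-full component. From these it concludes that, when $T$ is no longer minimal in $G'$, there is a unique $T$-full component $D_0$. For every other component $D$ it shows that $T_D=N_{G'}(D)$ is a minimal separator of $G'$, because the component of $G'\setminus T_D$ containing $D_0$ is a second $T_D$-full one. It then absorbs $\prod_{u\in T\setminus T_D}(x^r-x_u^r)$ into the polynomial in one step via $Q_{T\setminus T_D,P}$, so $\theta_D^{T,P}=\theta_D^{T_D,Q_{T\setminus T_D,P}}$ is literally a single element of $\mathscr G(G')$. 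Finally it gets $\theta_{D_0}^{T,P}$ by subtraction from $\theta_{V\setminus T}^{T,P}=\sum_k q_k\eta_k$.

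\textbf{Your route.} You use the same three ingredients: absorbing factors into $P$, the two-full-components criterion, and subtraction from the total sum. The difference is that an induction on $|T|$, peeling off one vertex at a time, replaces the structural analysis. You also prove a statement intrinsic to $G'$: every $\theta_C^{T,P}$ with $T\subseteq V$ arbitrary and $C$ a union of components of $G'\setminus T$ lies in $\langle\mathscr G(G')\rangle$. The only link to $G$ you need is that components of $G\setminus T$ are unions of components of $G'\setminus T$. You never use that $T$ was minimal in $G$, the uniqueness of $D_0$, or the auxiliary component containing $D_0$.

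\textbf{What each buys.} Your approach buys robustness and generality. Since your claim uses nothing about $G'$ beyond connectivity, it applies to any connected graph. It therefore shows at once that derivations attached to non-minimal separators, such as $\theta^{\{1,2,4\},1}_{\{5\}}$ in the five-vertex example, lie in $\langle\mathscr G(G)\rangle$. The paper's approach buys an explicit, non-inductive identification of each non-full piece with one generator.

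\textbf{A minor phrasing point.} $V$ is not itself a union of components of $G'\setminus T'$. However, the terms with $i\in T'$ vanish, so $\theta_V^{T',Q}=\theta_{V\setminus T'}^{T',Q}$. In any case, expanding $\theta_V^{T,P}$ down to $T=\emptyset$ is a formal identity valid for any index set, so nothing is lost.
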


\begin{proof}
	The derivations \(\eta_0,\eta_1,\ldots,\eta_{\ell-1}\) occur in both
	\(\mathscr G(G)\) and \(\mathscr G(G')\). 
	It is enough to consider
	a minimal separator \(T\) of \(G\), a component
	\(C\in\mathscr C_G(T)\), and a polynomial \(P\in S[y]\), and to prove that
	$
	\theta^{T,P}_{C}\in\langle\mathscr G(G')\rangle.
	$
	
	
	Deleting an edge cannot merge components. Hence every component of
	\(G'\setminus T\) is contained in a unique component of \(G\setminus T\),
	and
	\[
	\theta_{C}^{T, P}
	=
	\sum_{\substack{D\in\mathscr C_{G'}(T)\\D\subseteq C}}
	\theta_{D}^{T, P}.
	\]
	It is enough to show each $
	\theta^{T,P}_{D}\in\langle\mathscr G(G')\rangle.
	$
	
	If \(T\) is a minimal separator of \(G'\), every summand on the right-hand
	side belongs to \(\mathscr G(G')\), and the required conclusion follows.
	
	Suppose now that \(T\) is not a minimal separator of \(G'\). Since \(T\)
	is minimal in \(G\), the graph \(G\setminus T\) has at least two
	\(T\)-full components. 
	Deleting one edge can affect the connectivity or
	the neighborhood of at most one of these components. Therefore at least
	one \(T\)-full component remains in \(G'\setminus T\). Since \(T\) is not
	minimal in \(G'\), there cannot be two such components. Consequently,
	\(G'\setminus T\) has a unique \(T\)-full component, denote it by \(D_0\).
	
	Let \(D\in\mathscr C_{G'}(T)\) with \(D\neq D_0\), and set
	$
	T_D=N_{G'}(D).
	$
	Because \(D\) is not \(T\)-full, \(T_D\subsetneq T\). Moreover, \(D\) is a
	\(T_D\)-full component of \(G'\setminus T_D\). Let \(E_0\) be the component
	of \(G'\setminus T_D\) containing \(D_0\). Since \(D_0\) is \(T\)-full,
	every vertex of \(T_D\) has a neighbor in \(D_0\), and hence
	$
	N_{G'}(E_0)=T_D.
	$
	Thus \(D\) and \(E_0\) are two distinct \(T_D\)-full components of
	\(G'\setminus T_D\). By the characterization above, \(T_D\) is a minimal
	separator of \(G'\), and \(D\in\mathscr C_{G'}(T_D)\).
	
	It remains to compare the derivations associated with \(T\) and \(T_D\).
	Write
	$
	P(y)=\sum_{k=0}^{s}p_k y^k
	$
	and define
	$
	\widehat P(y)=\sum_{k=0}^{s}\frac{p_k}{kr+1}y^k .
	$
	Then
	\[
	\int_0^x P(t^r)\,dt=x\,\widehat P(x^r).
	\]
	For a subset \(R\subseteq V(G)\), define
	$
	B_{R,P}(y)
	=\prod_{u\in R}(y-x_u^r)\widehat P(y)
	$
	and
	$
	Q_{R,P}(y)=B_{R,P}(y)+ry\frac{d}{dy}B_{R,P}(y).
	$
	Both polynomials belong to \(S[y]\). 
	We have
	\[
	\frac{d}{dx}\bigl(xB_{R,P}(x^r)\bigr)=Q_{R,P}(x^r).
	\]
	Since \(xB_{R,P}(x^r)\) vanishes at \(x=0\), integration yields the identity
	\[
	\int_0^x Q_{R,P}(t^r)\,dt
	=
	\prod_{u\in R}(x^r-x_u^r)
	\int_0^x P(t^r)\,dt.
	\]
	
	Apply this identity with \(R=T\setminus T_D\). 
	For every \(i\in D\),
	\[
		\prod_{u\in T_D}(x_i^r-x_u^r)
		\int_0^{x_i}Q_{T\setminus T_D,P}(t^r)\,dt
		=
		\prod_{u\in T}(x_i^r-x_u^r)
		\int_0^{x_i}P(t^r)\,dt.
	\]
	Therefore
	$
	\theta^{T,P}_{D}
	=
	\theta^{T_D,Q_{T\setminus T_D,P}}_{D}
	\in\mathscr G(G').
	$
	
	It remains to treat the unique \(T\)-full component \(D_0\). Consider the
	sum over all components of \(G'\setminus T\):
	\[
	\theta
	=
	\sum_{D\in\mathscr C_{G'}(T)}\theta^{T,P}_{D}.
	\]
	Using the preceding identity with \(R=T\), we obtain
	\[
	\begin{aligned}
		\theta
		&=
		\sum_{i\notin T}
		\prod_{u\in T}(x_i^r-x_u^r)
		\int_0^{x_i}P(t^r)\,dt\,\partial_i\\
		&=
		\sum_{i=1}^{\ell}
		\int_0^{x_i}Q_{T,P}(t^r)\,dt\,\partial_i.
	\end{aligned}
	\]
	Write \(Q_{T,P}(y)=\sum_{k=0}^{m}q_k y^k\), with \(q_k\in S\). By the
	definition of \(\eta_k\),
	\[
	\theta=\sum_{k=0}^{m}q_k\eta_k.
	\]
	Theorem \ref{thm:basis for complete graph} and Remark \ref{rem:reduction} imply that every \(\eta_k\), \(k\geq\ell\), is
	an \(S\)-linear combination of
	\(\eta_0,\eta_1,\ldots,\eta_{\ell-1}\). Hence
	$
	\theta \in\langle\mathscr G(G')\rangle.
	$
	All terms in the definition of $\theta $, except possibly the term
	corresponding to \(D_0\), have already been shown to belong to
	\(\langle\mathscr G(G')\rangle\). 
	Therefore
	\[
	\theta^{T,P}_{D_0}
	=
	\theta
	-
	\sum_{\substack{D\in\mathscr C_{G'}(T)\\D\neq D_0}}
	\theta^{T,P}_{D}
	\in\langle\mathscr G(G')\rangle.
	\]
	
	We have proved that \(\theta^{T,P}_{D}\) belongs to
	\(\langle\mathscr G(G')\rangle\) for every component
	\(D\in\mathscr C_{G'}(T)\). The component decomposition above now gives
	\(\theta^{T,P}_{C}\in\langle\mathscr G(G')\rangle\) for every
	\(C\in\mathscr C_G(T)\). This proves
	$
	\mathscr G(G)\subseteq\langle\mathscr G(G')\rangle .
	$
\end{proof}

%
%
%

\begin{theorem}
	\label{thm:generators}
	For a simple graph $G$,
	the derivation module $\D(\A_G)$ of the arrangement $\A_G$ of type $G(r, p ,\ell)$, where $p<r$, is generated by $\mathscr{G}(G)$.
\end{theorem}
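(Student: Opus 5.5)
We argue by induction on the number of non-edges of $G$, i.e.\ on $\binom{\ell}{2}-|E(G)|$, following the strategy of M\"uhlherr \cite{l}. By Lemma \ref{lemma:derivation} and $\D(\A_{K_\ell})\subseteq\D(\A_G)$ we have $\langle\mathscr G(G)\rangle\subseteq\D(\A_G)$, so only the reverse inclusion needs proof. The base case $G=K_\ell$ is Theorem \ref{thm:basis for complete graph}, since $K_\ell$ has no separators and $\mathscr G(K_\ell)=\{\eta_0,\dots,\eta_{\ell-1}\}$. For the inductive step, let $G$ be connected and not complete, and choose a non-edge $\{v,w\}$. Put $G^+=G\cup\{\{v,w\}\}$. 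By induction $\D(\A_{G^+})=\langle\mathscr G(G^+)\rangle$, and by Lemma \ref{lem:generators} (applied with $G^+$ in the role of $G$ and $G$ in the role of $G'$) we get $\mathscr G(G^+)\subseteq\langle\mathscr G(G)\rangle$. It therefore suffices to show that every $\theta\in\D(\A_G)$ can be written as $\theta=\theta'+\theta''$ with $\theta'\in\langle\mathscr G(G)\rangle$ and $\theta''\in\D(\A_{G^+})$. Equivalently, after subtracting elements of $\langle\mathscr G(G)\rangle$, we need $\theta(x_v^r-x_w^r)\in(x_v^r-x_w^r)S$.

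Fix $\theta\in\D(\A_G)$ and a root $\zeta$ with $\zeta^r=1$. Let $B_1,\dots,B_m$ be the internal vertex sets of all paths from $v$ to $w$. By \eqref{equ:xvr-xwr}, the class $\overline{\theta(x_v^r-x_w^r)}$ lies in $I=\bigcap_j I_j$ in $S/\langle x_v-\zeta x_w\rangle\cong\mathbb K[x_i: i\neq w]$. Moreover, $\theta(x_v)\in x_vS$ and $\theta(x_w)\in x_wS$, so $\theta(x_v^r-x_w^r)$ is divisible by $x_v^r$ modulo $x_v-\zeta x_w$. Hence it lies in $I\cap\langle x_v^r\rangle$. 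By Lemma \ref{lem:xvf} and Lemma \ref{lem:z_new}, this is generated by the elements $x_v^r\prod_{u\in U}(x_v^r-x_u^r)$, where $U$ is a $(v,w)$-separator, i.e.\ $U$ meets every $B_j$. Shrinking $U$ only enlarges the ideal generated, so we may take $U=T$ to be a minimal $(v,w)$-separator. This gives
\[
\theta(x_v^r-x_w^r)\equiv\sum_T h_T\, x_v^r\prod_{u\in T}(x_v^r-x_u^r)\pmod{x_v-\zeta x_w}.
\]

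Next we match each such term with a separator-based derivation. Let $C_v\in\mathscr C(T)$ be the component containing $v$; note that $w\notin C_v$. Take $P=1$ in Definition \ref{def:separator-based}. Then
\[
\theta_{C_v}^{T,1}(x_v^r-x_w^r)=r x_v^{r}\prod_{u\in T}(x_v^r-x_u^r),
\]
because $\partial_w$ does not occur in $\theta_{C_v}^{T,1}$. To subtract these derivations we need coefficients that do not depend on $\zeta$. We therefore try to lift $h_T$ from $S/\langle x_v-\zeta x_w\rangle$ to a polynomial in $S$ that works for all $r$-th roots of unity at once. The plan is to use that $x_v^r-x_w^r=\prod_\zeta(x_v-\zeta x_w)$ is a product of pairwise coprime linear factors, and to combine the congruences by a Chinese-remainder/interpolation argument; alternatively, we work directly modulo $x_v^r-x_w^r$ by substituting $x_w^r=x_v^r$. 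After subtracting $\sum_T r^{-1}h_T\theta_{C_v}^{T,1}$, which lies in $\langle\mathscr G(G)\rangle$, the remainder is divisible by $x_v^r-x_w^r$ and hence belongs to $\D(\A_{G^+})$, completing the induction. Since there are finitely many minimal separators, and the $\eta_k$ for $k>\kappa(G)$ presumably reduce to $\theta$'s (a separator $T$ with $|T|=\kappa(G)$ combined with Lemma \ref{lem:generators}-type identities), this also yields Theorem \ref{thm:main}.

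The main obstacle is the lifting step. The congruence is only known modulo each linear factor separately, the interpolation introduces denominators $x_v-\zeta x_w$, and the element $x_v^r$ must be tracked carefully so that the lifted coefficients stay polynomial. A second difficulty is that minimal separators for which $v$'s component equals $w$'s component do not occur here, and this has to be checked.
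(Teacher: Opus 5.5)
\textbf{Verdict: there is a genuine gap.} Your skeleton matches the paper's. You induct from $K_\ell$ by restoring a non-edge $\{v,w\}$. You use Lemma \ref{lem:generators} to get $\mathscr G(G^+)\subseteq\langle\mathscr G(G)\rangle$. You then subtract separator derivations with $\theta^{T,1}_{C_v}(F)=rx_v^r\prod_{u\in T}(x_v^r-x_u^r)$, where $F=x_v^r-x_w^r$. Taking $P=1$ and moving the extra factors $\prod_{u\in U\setminus T}(x_v^r-x_u^r)$ into the coefficient is fine.

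The gap is the step that actually carries the induction. You only obtain, separately for each $\zeta$, a congruence $\theta(F)\equiv\sum_T h^{(\zeta)}_T\,x_v^r\prod_{u\in T}(x_v^r-x_u^r)$ modulo $x_v-\zeta x_w$, and the coefficients depend on $\zeta$. You then leave the passage to coefficients in $S$ that work for all $\zeta$ at once as ``the main obstacle'', with two unexecuted plans. Without that step you cannot produce a single $\delta\in\langle\mathscr G(G)\rangle$ with $F\mid(\theta-\delta)(F)$, so the proof is incomplete. Your second worry is vacuous: $T$ is a $(v,w)$-separator, so $v$ and $w$ lie in different components.

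\textbf{How the paper closes it.} It never passes to $S/\langle x_v-\zeta x_w\rangle$.
\begin{itemize}
\item Write $\theta(x_i)=x_ig_i$ and $h=g_v-g_w$. Then $\theta(F)=rx_v^rh+rFg_w$ holds in $S$.
\item The telescoping \eqref{equ:xvr-xwr} along a path $B_j$ shows, already in $S$, that $\theta(F)\in J_j=\langle x_v^r-x_i^r\mid i\in B_j\cup\{w\}\rangle$. Because $F\in J_j$, this one ideal accounts for all $r$ linear factors simultaneously.
\item Hence $x_v^rh\in\bigcap_jJ_j$. Lemmas \ref{lem:z_new} and \ref{lem:xvf}, applied to the augmented sets $B_j\cup\{w\}$, give $h=FF_0+\sum_U\prod_{u\in U}(x_v^r-x_u^r)F_U$ with $F_0,F_U\in S$.
\item Terms with $w\in U$ are multiples of $F$. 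Every other $U$ is a $(v,w)$-separator. The $F_U$ are exactly the $\zeta$-independent coefficients you were missing.
\end{itemize}

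\textbf{Your Chinese-remainder route can also be completed, but it needs an ingredient you did not state.}
\begin{itemize}
\item Let $K=\langle\prod_{u\in T}(x_v^r-x_u^r)\rangle$ over minimal $(v,w)$-separators $T$. By Lemmas \ref{lem:z_new} and \ref{lem:xvf}, $x_v$ is a nonzerodivisor on $\mathbb K[x_i\mid i\neq w]/K$.
\item Cancelling $x_v^r$ in each domain $S/\langle x_v-\zeta x_w\rangle$ gives $h\in K+\langle x_v-\zeta x_w\rangle$ for every $\zeta$.
\item View $S/K$ as a polynomial ring in $x_w$ over $\mathbb K[x_i\mid i\neq w]/K$. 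Factor out the roots $\zeta^{-1}x_v$ one at a time; each step is legitimate because the root differences $(\zeta^{-1}-\zeta'^{-1})x_v$ are nonzerodivisors.
\item This yields $h\in K+FS$, as required.
\end{itemize}
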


\begin{proof}
	We use induction by reducing the edges of a connected graph.
	By Theorem \ref{thm:basis for complete graph}, 
	$\D(\A_{K_\ell})$ is generated by $\eta_0, \eta_1, \dots, \eta_{\ell-1}$.
	Assume that $\D(\A_G)$ is generated by $\mathscr{G}(G)$. 
	It is enough to prove $\D(\A_{G'})$ is generated by $\mathscr{G}(G')$, 
	where $G'= (V(G), E(G)\backslash \{v,w\})$ for $ \{v, w\} \in E(G)$.

	We have $m$ distinct paths $B_j =\{v_1^j, v_2^j, \dots, v_{n_j}^j\}  $ $(j=1,2,\dots,m) $ between $v, w$.
	Suppose $\theta \in \D(\A_{G'})$,
	we want to construct a derivation $\eta \in \D(\A_G)$ 
	which is the sum of $\theta$ and elements in $\mathscr{G}(G')$.
	For $\zeta$ in the cyclic group $C_r$,
	
	Let
	$
	F=x_v^r-x_w^r=\prod_{\zeta^r=1}(x_v-\zeta x_w).
	$
	By the Leibniz rule,
	\[
	\theta(F)
	=\frac{F}{x_v-\zeta x_w}\theta(x_v-\zeta x_w)
	+(x_v-\zeta x_w)\theta\left(\frac{F}{x_v-\zeta x_w}\right).
	\]
	Moreover,
	\[
	\frac{F}{x_v-\zeta x_w}
	=\sum_{i=0}^{r-1}x_v^{r-1-i}(\zeta x_w)^i
	\equiv r x_v^{r-1}\pmod{x_v-\zeta x_w}.
	\]
	Then
	\begin{align}
		\label{equ:theta_F}
		\theta(F)
		\equiv r x_v^{r-1}\theta(x_v-\zeta x_w)
		\pmod{x_v-\zeta x_w}.
	\end{align}
	 
	Since hyperplanes $x_i=0$ belong to $\A_{G'}$,
	we have
	$
	\theta(x_i)=x_i g_i$
	for some
	$g_i\in S.
	$
	Set \(h=g_v-g_w\). Then
	\[
	\theta(F)
	=rx_v^{r}g_v-rx_w^{r}g_w
	=r x_v^r h+rF g_w.
	\]
	
	For each path \(B_j\), the same argument as in equation \eqref{equ:xvr-xwr}
	gives
	\[
	\theta(F)\in
	J_j:=
	\langle F,\,x_v^r-x_i^r\mid i\in B_j\rangle
	=
	\langle x_v^r-x_i^r\mid i\in B_j\cup\{w\}\rangle.
	\]
	Since $rFg_w \in J_j$,
	it follows that \(x_v^r h\in J_j\). 
	Let
	$Z_{ \mathrm{new} }$ denote the set associated with
	$B_j'=B_j\cup\{w\}$ defined before Lemma \ref{lem:z_new}, 
	that is,
	\begin{equation*}
		Z_{ \mathrm{new} }=\bigl\{ \prod_{u\in U} (x_v^r-x_u^r) \mid U \subseteq \{1,2,\dots,\ell\}, \ 
		U \cap B_j' \neq \emptyset \ (1 \leq j \leq m) \bigr\},
	\end{equation*} 
	where the condition on $U$ is as stated in the set definition.
	Applying Lemma \ref{lem:z_new} directly gives
	\(J=\bigcap_{j=1}^{m}J_j=\langle Z_{ \mathrm{new} }\rangle\).
	Since \(x_v^r h\in J\), Lemma \ref{lem:xvf} gives
	\(h\in J=\langle Z_{ \mathrm{new} }\rangle\).
	If \(w\in U\), the corresponding generator of \(Z_{ \mathrm{new} }\)
	contains the factor \(F=x_v^r-x_w^r\). 
	If \(w\notin U\), then \(U\) meets
	every \(B_j\). 
	Hence
	\[
	h=FF_0+\sum_{U\in M}
	\left(\prod_{u\in U}(x_v^r-x_u^r)\right)F_U,
	\]
	where \(F_0,F_U\in S\) and
	$
	M=
	\{U\subseteq V(G')\setminus\{v,w\}\mid
	U\cap B_j\neq\varnothing\text{ for every }j\}.
	$
	
	For each \(U\in  M\), choose a minimal \((v,w)\)-separator
	\(T_U\subseteq U\), and let \(C_U\) be the component of
	\(G'\setminus T_U\) containing \(v\). Put \(R_U=U\setminus T_U\) and
	$
	A_{R_U}(y)=\prod_{u\in R_U}(y-x_u^r).
	$
	Define $P_U(y)$ by
	$
	P_U(y)=A_{R_U}(y)+ryA_{R_U}'(y)\in S[y].
	$
	Since
	$
	\frac{d}{dx}\bigl(xA_{R_U}(x^r)\bigr)=P_U(x^r),
	$
	we obtain
	$
	\int_0^xP_U(t^r)\,dt
	=xA_{R_U}(x^r)
	=x\prod_{u\in R_U}(x^r-x_u^r).
	$
	The arbitrary polynomials \(F_U\) remain \(S\)-module coefficients and
	are not absorbed into \(P_U\).
	
	Let \(\delta_U=\theta^{T_U,P_U}_{C_U} \in\langle\mathscr G(G')\rangle\). Since
	\(v\in C_U\) and \(w\notin C_U\),
	$
	\delta_U(x_v)
	=x_v\prod_{u\in U}(x_v^r-x_u^r),
	$
	$
	\delta_U(x_w)=0,
	$
	and consequently
	$
	\delta_U(F)
	=r x_v^r\prod_{u\in U}(x_v^r-x_u^r).
	$
	Set
	$
	\delta=\sum_{U\in M}F_U\delta_U,
	\eta=\theta-\delta.
	$
	We have \(\delta\in\langle\mathscr G(G')\rangle\), 
	$\eta \in \D(\A_{G'})$
	, and the expression for
	\(h\) gives
	\begin{align*}
		\eta(F)
		&=\theta(F)-\delta(F) \\
		&=rx_v^rh+rFg_w-\sum_{U\in M}F_U\delta_U(F) \\
		&=rF(x_v^rF_0+g_w)\in FS.
	\end{align*}
	Since \(F\mid\eta (F)\),
	we have $\eta (F)\equiv 0$ (mod $x_v-\zeta x_w$) for every $\zeta^r=1$.
	On the other hand,
	equation \eqref{equ:theta_F} applied to $\eta $ gives 
	$
	0 \equiv
	\eta (F)
	\equiv r x_v^{r-1}\eta (x_v-~\zeta x_w)
	\pmod{x_v-\zeta x_w}.
	$
	Because 
	$x_v- \zeta x_w$ is prime and 
	$x_v- \zeta x_w$ does not divide $r x_v^{r-1}$,
	it follows that
	$
	\eta (x_v-\zeta x_w)
	\in(x_v-\zeta x_w)S
	$
	for every
	$
	\zeta^r=1.
	$
	Thus \(\eta \in D(\mathcal A_G)\). 
	
	By the induction hypothesis and
	Lemma \ref{lem:generators},
	$
	\eta
	\in\langle\mathscr G(G)\rangle
	\subseteq\langle\mathscr G(G')\rangle.
	$
	Finally,
	$
	\theta=\eta +\delta
	\in\langle\mathscr G(G')\rangle.
	$
	This completes the induction.
\end{proof}

The following descending chain,
defined by M\"uhlherr \cite{l}, 
plays a significant role in reducing the infinite set $\mathscr{G}(G)$ to a finite set.

\begin{definition}
	\label{def:descending chain}
	Let $T$ be a minimal separator of $G$, $C=\{v_1, v_2, \dots, v_n\}$ be a connected component of $G_{V\backslash T}$.
	Set $C_i=\{v_1, v_2, \dots, v_i\}$, and $T_i=\bigl(\cup_{j=1,2,\dots,i} N(v_j) \bigr)\backslash C_i $ for $i=1,2,\dots,n$. 
	The pairs $(T_i, C_i)$ $(1\le i \le n)$ are called a descending chain of $(T, C)$.
\end{definition}

\begin{remark}
	\label{rem:descending chain}
	We have $T_n = T$ and $C_n=C$.
	By definition, $T_i$ may not be a minimal separator, 
	and $C_i$ may be a union of connected components. 
	We obtain a different descending chain if the chosen order of vertices of $C$ is different.
\end{remark}

\begin{lemma}
	\label{lem:descending chain}
	Given a minimal separator $T$, and $C\in \mathscr{C}(T)$. 
	The derivation $\theta_C^{T,P}$ can be generated by the derivations $\theta_{C_i}^{T_i,1}$ defined by descending chain of $(T, C)$  
	for $1\le i \le n=|C|$.
\end{lemma}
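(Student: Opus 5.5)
The plan is to prove a stronger statement by induction on $i$: for every $1\le i\le n$ and every $P\in S[y]$, the derivation $\theta_{C_i}^{T_i,P}$ lies in the $S$-module generated by $\theta_{C_1}^{T_1,1},\dots,\theta_{C_i}^{T_i,1}$. Here $\theta_{C_i}^{T_i,P}$ is defined by the same formula as in Definition \ref{def:separator-based}, with $C_i$ an arbitrary vertex set and $T_i$ as in Definition \ref{def:descending chain}. Taking $i=n$ gives the lemma, since $T_n=T$ and $C_n=C$ by Remark \ref{rem:descending chain}.

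Throughout I will use the notation from the proof of Lemma \ref{lem:generators}. There $\int_0^x P(t^r)\,dt=x\widehat P(x^r)$, and conversely every polynomial of the form $xR(x^r)$ equals $\int_0^x \widetilde R(t^r)\,dt$ for a suitable $\widetilde R\in S[y]$; namely the coefficients of $R$ are multiplied by $kr+1$. I will also use the identity
\[
\prod_{u\in R}(x^r-x_u^r)\int_0^x P(t^r)\,dt=\int_0^x Q_{R,P}(t^r)\,dt ,
\]
which lets me absorb extra factors $\prod_{u\in R}(x_j^r-x_u^r)$ into the polynomial $P$.

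\emph{Base case $i=1$.} Here $C_1=\{v_1\}$, so $\theta_{C_1}^{T_1,P}$ has a single term. It equals $\widehat P(x_{v_1}^r)\,\theta_{C_1}^{T_1,1}$, where $\widehat P(x_{v_1}^r)\in S$ is an honest coefficient.

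\emph{Inductive step.} Assume the claim for $i-1$. I split $\widehat P(y)=\widehat P(x_{v_i}^r)+(y-x_{v_i}^r)R(y)$ with $R\in S[y]$; this is possible because $\widehat P(y)-\widehat P(x_{v_i}^r)$ vanishes at $y=x_{v_i}^r$. Then
\[
\theta_{C_i}^{T_i,P}=\widehat P(x_{v_i}^r)\,\theta_{C_i}^{T_i,1}+\sum_{j\in C_i}\prod_{u\in T_i\cup\{v_i\}}(x_j^r-x_u^r)\,x_jR(x_j^r)\,\partial_j .
\]
In the second sum the term $j=v_i$ vanishes because of the factor $x_{v_i}^r-x_{v_i}^r$. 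Hence that sum runs over $C_{i-1}$ only.

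The key combinatorial point is the inclusion $T_{i-1}\subseteq T_i\cup\{v_i\}$. To see it, take $u\in T_{i-1}$. Then $u$ is a neighbour of some $v_j$ with $j<i$, and $u\notin C_{i-1}$. If $u\neq v_i$, then $u\notin C_i$, and so $u\in T_i$.

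Put $R'=(T_i\cup\{v_i\})\setminus T_{i-1}$. Writing $x_jR(x_j^r)=\int_0^{x_j}\widetilde R(t^r)\,dt$ and applying the identity above with the set $R'$, the second sum becomes $\theta_{C_{i-1}}^{T_{i-1},Q_{R',\widetilde R}}$. By the induction hypothesis this lies in the module generated by $\theta_{C_1}^{T_1,1},\dots,\theta_{C_{i-1}}^{T_{i-1},1}$, which completes the step.

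The step needing most care is checking that the splitting produces genuinely $S$-linear combinations. The coefficient $\widehat P(x_{v_i}^r)$ depends only on the fixed variable $x_{v_i}$ and not on the summation index $j$, so it is a legitimate element of $S$. The remaining coordinate-dependent part is kept inside the integral expression, and the conversion $x R(x^r)\leftrightarrow\int_0^x\widetilde R(t^r)\,dt$ ensures it is again of the form $\theta^{T_{i-1},\,\cdot}_{C_{i-1}}$. No connectivity or minimality of $T_i$ or $C_i$ is needed. This is consistent with Remark \ref{rem:descending chain}, which warns that these may fail.
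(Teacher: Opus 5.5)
Your proof is correct and is essentially the paper's argument: induction along the descending chain, splitting off the value at $x_{v_i}^r$ so that the $v_i$-term vanishes, and using $T_{i-1}\subseteq T_i\cup\{v_i\}$ to absorb the leftover factors into a new polynomial at level $i-1$. Your factor-theorem splitting of $\widehat P$ for arbitrary $P$, with the induction quantified over all $P$, is a slightly cleaner form of the paper's monomial computation with $x_j^{mr}-x_{v_i}^{mr}$.

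One small fix is needed, and the paper's Remark \ref{rem:descending chain} has the same slip. In general $T_n=N_G(C)$, which equals $T$ only when $C$ is $T$-full, so $T_n\subsetneq T$ is possible. Before applying the case $i=n$, you should therefore first write $\theta_C^{T,P}=\theta_C^{T_n,\,Q_{T\setminus T_n,P}}$, which is valid by your absorption identity since $T_n\subseteq T$.
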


\begin{proof}
	It is enough to show $\theta_C^{T, y^m}$ is generated by $\theta_{C_i}^{T_i,1}$ for $m \ge 0$.
	We use induction on $i$ to prove $\theta_{C_i}^{T_i, y^m} \in \langle \theta_{C_j}^{T_j, P_j}\ (j < i), \theta_{C_i}^{T_i, 1} \rangle$,
	where $P_j\in S[y]$.
	When $i=1$, $C_1$ has a single vertex $v_1$,
	we have
	\begin{align*}
		\theta_{C_1}^{T_{1}, y^m} & = \prod_{t\in T_{1}} (x_{v_1}^r-x_t^r) \int_0^{x_{v_1}} t^{mr} dt \partial_{v_1}\\
								  & \doteq x_{v_1}^{mr} \cdot \prod_{t\in T_{1}} (x_{v_1}^r-x_t^r) \int_0^{x_{v_1}} 1 dt \partial_{v_1} \\
								  & = x_{v_1}^{mr} \cdot \theta_{C_1}^{T_{1}, 1}.
	\end{align*}
	Note that $T_{i-1} \subseteq T_i \cup \{v_i\}$. 
	For $i > 1$,  
	we have 
	\begin{align*}
		(mr+1)\theta_{C_i}^{T_{i}, y^m} - x_{v_i}^{mr} \cdot \theta_{C_i}^{T_{i}, 1} 
			& = \sum_{j\in C_i} \prod_{t\in T_i} (x_j^r-x_t^r) \cdot (x_{j}^{mr}- x_{v_i}^{mr})\cdot x_j \partial_j    \\
			& = \sum_{j\in C_i} \prod_{t\in T_i} (x_j^r-x_t^r) \cdot (x_{j}^{r}- x_{v_i}^{r}) \int_0^{x_{j}} H(t^r) dt \partial_j    \\
			& = \sum_{j\in C_{i-1}} \prod_{t\in T_i} (x_j^r-x_t^r) \cdot (x_{j}^{r}- x_{v_i}^{r}) \int_0^{x_{j}} H(t^r) dt \partial_j \\
			& = \sum_{j\in C_{i-1}} \prod_{t\in T_{i-1}} (x_j^r-x_t^r) \cdot \prod_{t\in T_{i} \cup \{v_i\} \backslash T_{i-1} } (x_{j}^{r}- x_t^{r}) \int_0^{x_{j}}  H(t^r) dt \partial_j \\
			& = \sum_{j\in C_{i-1}} \prod_{t\in T_{i-1}} (x_j^r-x_t^r)  \int_0^{x_{j}} J(t^r) dt \partial_j \\
			& = \theta_{C_{i-1}}^{T_{i-1}, J}
	\end{align*}
	where $H, J\in S[y]$.
	By induction, the proof is complete.
\end{proof}

\begin{theorem}
	\label{thm:reduce theta_k}
	Given a non-complete graph $G=(V, E)$ with connectivity $\kappa(G)$.
	The vector fields $\eta_k$ $(k\ge \kappa(G))$ is generated by 
	$\eta_0, \eta_1,\dots, \eta_{ \kappa(G) -1 }$ and all derivations $\theta_C^{T, 1}$, 
	where $T$ is a separator of $G$ and $C\in \mathscr{C}(T)$.
\end{theorem}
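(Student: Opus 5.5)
Set $\kappa=\kappa(G)$. Since $G$ is not complete, there is a separator $T$ with $|T|=\kappa$; we may take it minimal. Let $\mathscr C(T)=\{C_1,\dots,C_s\}$ with $s\ge 2$. The plan is to express $\eta_\kappa$ through $\eta_0,\dots,\eta_{\kappa-1}$ and the derivations $\theta_{C_j}^{T,P}$, which reduce to the $\theta^{T',1}$'s by Lemma \ref{lem:descending chain}. Then we push this to all $k\ge\kappa$ by induction.

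\textbf{The identity.} Write
\[
\prod_{u\in T}(y-x_u^r)=y^{\kappa}+\sum_{k=0}^{\kappa-1}c_k y^k,
\]
where $c_k\in S$ are, up to sign, elementary symmetric functions in $x_u^r$ for $u\in T$. Put $P(y)=y^{\kappa}+\sum_{k<\kappa}c_k y^k$; this is the polynomial $Q_{T,1}$ of the proof of Lemma \ref{lem:generators}, up to the normalisation $\doteq$ of the $\eta_k$. Consider
\[
\sum_{k=0}^{\kappa}c_k\eta_k \qquad (c_\kappa=1).
\]
Its $\partial_i$-coefficient is $\int_0^{x_i}P(t^r)\,dt$.

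\textbf{Matching coefficients.} Use the identity from the proof of Lemma \ref{lem:generators}, $\int_0^x Q_{R,P}(t^r)\,dt=\prod_{u\in R}(x^r-x_u^r)\int_0^xP(t^r)\,dt$, in reverse. Choose $P$ as $Q_{T,1}$ so that $\int_0^{x_i}Q_{T,1}(t^r)\,dt=x_i\prod_{u\in T}(x_i^r-x_u^r)$.
\begin{itemize}
\item For $i\in T$ this coefficient vanishes.
\item For $i\notin T$ it equals the $\partial_i$-coefficient of $\theta_{C_j}^{T,1}$, where $C_j\ni i$.
\end{itemize}
Hence
\[
\sum_k q_k\eta_k=\sum_j\theta_{C_j}^{T,1},
\]
where $Q_{T,1}(y)=\sum_{k=0}^{\kappa}q_k y^k$ and $q_\kappa=\kappa r+1$ is a nonzero scalar. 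Solving for $\eta_\kappa$ expresses it via $\eta_0,\dots,\eta_{\kappa-1}$ and the $\theta_{C_j}^{T,1}$.

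\textbf{Higher $k$.} For $k>\kappa$, repeat the computation with $Q_{T,y^{k-\kappa}}$ in place of $Q_{T,1}$. Its top coefficient is a nonzero scalar times $y^{k}$, and its lower terms involve only $\eta_{k'}$ with $k'<k$. The right side becomes $\sum_j\theta_{C_j}^{T,y^{k-\kappa}}$, each term of which lies in the span of the $\theta_{C_i'}^{T_i',1}$ by Lemma \ref{lem:descending chain}.

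A subtlety is that the sets $T_i'$ in a descending chain need not be minimal separators. The statement allows arbitrary separators $T$, and each $T_i$ still separates $C_i$ from the rest; if $T_i\cup C_i=V$, then $\theta_{C_i}^{T_i,1}$ should itself be rewritten via $\eta$'s. This is the point requiring care. Induction on $k$ then finishes the argument.

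\textbf{Main obstacle.} The main difficulty is bookkeeping the scalar normalisations ($\doteq$) and checking that the leading coefficient of $Q_{T,y^{m}}$ in $y$ is the nonzero constant $(m+\kappa)r+1$. This holds in characteristic zero, since $\mathbb K=\mathbb C$.
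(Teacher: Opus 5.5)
Your argument is correct, but it is organised differently from the paper's. The paper keeps $P=1$ and lets the separator grow. For each $k\ge\kappa(G)$ it takes a separator $T$ with $|T|=k$ and expands $x_i\prod_{t\in T}(x_i^r-x_t^r)=f_0x_i+\dots+x_i^{kr+1}$. It then reads off $\sum_{C\in\mathscr C(T)}\theta_C^{T,1}=\eta_k+\sum_{j<k}f_j\eta_j$ in the $\doteq$ normalisation, so Lemma \ref{lem:descending chain} is never used. You instead fix one minimum separator and raise the degree through $P=y^{k-\kappa}$. This costs you Lemma \ref{lem:descending chain}, and the resulting generators are attached to the possibly non-minimal sets $T_i'$ of the descending chains.

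In exchange, your argument is uniform in $k$:
\begin{itemize}
\item It covers $k=\ell-1$. There no separator of cardinality $k$ exists, since a $(v,w)$-separator has at most $\ell-2$ elements, so the paper's argument does not literally apply. Yet the cases $k\ge\ell$ depend on $\eta_{\ell-1}$ through Remark \ref{rem:reduction}.
\item It shows that the descending chains of a single minimum separator already suffice. This is a sharper form of Corollary \ref{cor:reduce theta_k}.
\end{itemize}
What the paper's route buys, for $\kappa\le k\le\ell-2$, is a one-line coefficient comparison with no auxiliary lemma.

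You should close two loose ends.
\begin{itemize}
\item The case you flag as ``requiring care'' never occurs. Let $(T_i',C_i')$ be a descending chain of $(T,C_j)$. Every neighbour of a vertex of $C_j$ lies in $C_j\cup T$, so $T_i'\cup C_i'\subseteq C_j\cup T$. This set misses the other components of $G\setminus T$, of which there is at least one. Hence $T_i'$ is a genuine separator.
\item $C_i'$ is in general only a union of components of $G\setminus T_i'$, since all its outside neighbours lie in $T_i'$. So write $\theta_{C_i'}^{T_i',1}$ as the sum of the $\theta_D^{T_i',1}$ over components $D\subseteq C_i'$; only then does it land in the allowed generating set.
\end{itemize}

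There are also two harmless normalisation slips. The polynomial $\prod_{u\in T}(y-x_u^r)$ in your first step is not $Q_{T,1}$: their coefficients differ by the factors $kr+1$. And the top coefficient of $Q_{T,y^m}$ is $\frac{(m+\kappa)r+1}{mr+1}$, not $(m+\kappa)r+1$. Only its nonvanishing matters, so neither affects the argument.
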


\begin{proof} 
	For a separator $T$ with $|T| = k \ge \kappa(G)$,
	consider 
	\begin{equation*}
		\theta 
		      = \sum_{C\in \mathscr{C}(T)} \theta_C^{T,1}
		      = \sum_{i\in V \backslash T} \prod_{t\in T} (x_i^r-x_t^r)\cdot x_i \partial_i 
		      = \theta_{V\backslash T}^{T,1}.
	\end{equation*}
	The entry $\prod_{t\in T} (x_i^r-x_t^r)\cdot x_i$ can be expressed by $f_0 x_i +f_1 x_i^{r+1}+\dots+ x_i^{kr+1}$, 
	for $f_i \in S$. 
	Then we have $\theta_{V\backslash T}^{T,1} - \sum_{i=0,1,\dots, k-1} f_i \eta_i = \eta_k$. 
	Since $\theta_{V\backslash T}^{T,1}$ is a sum of derivations $\theta_C^{T, 1}$ over $C \in \mathscr{C}(T)$,
	$\eta_k$ is generated by $\eta_0, \eta_1, \dots, \eta_{k-1}$ and $\theta_C^{T,1}$.
	By induction on $k$, the proof is complete.
\end{proof}

Then we can obtain the main theorem \ref{thm:main}. 

\begin{proof}{(Proof of Theorem \ref{thm:main})}
	From Theorem \ref{thm:generators}, Lemma \ref{lem:descending chain}, and Theorem \ref{thm:reduce theta_k}, the proof of main theorem is complete.
\end{proof}

Note that the derivation $\eta_{ \kappa(G) }$ can be excluded in the main theorem when $G$ is not complete.
And there are still too many generators of the form $\theta_C^{T,1}$.

\begin{corollary}
	\label{cor:reduce theta_k}
	Let G be a non-complete graph.
	The vector fields $\eta_k$ $(k\ge \kappa(G))$ is generated by 
	$\eta_0, \eta_1,\dots, \eta_{ \kappa(G) -1 }$ 
	and derivations $\theta_{C_i}^{T_i,1}$ defined by a descending chain $(T_i,C_i)$ of $(T, C)$, 
	for all minimal separators $T$ and $C\in \mathscr{C}(T)$.
\end{corollary}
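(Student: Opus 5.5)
The plan is to combine Theorem \ref{thm:reduce theta_k} with Lemma \ref{lem:descending chain}, sharpening the separators used in the former from arbitrary to minimal ones.

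\textbf{Step 1: a good separator.} Take $k\ge\kappa(G)$. Since $G$ is non-complete, there is a separator of size exactly $\kappa(G)$; such a separator is automatically minimal, since a proper subset would disconnect $G$ with fewer than $\kappa(G)$ vertices. To handle larger $k$ uniformly, I would run the induction of Theorem \ref{thm:reduce theta_k} on $k$, but at each stage choose a minimal separator $T$ with $|T|=\kappa(G)$. I then write
\[
\theta_{V\backslash T}^{T,y^{m}}=\sum_{C\in\mathscr C(T)}\theta_C^{T,y^m},
\]
choosing $m=k-\kappa(G)$ so that the top power is $x_i^{kr+1}$.

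\textbf{Step 2: expanding the sum.} Expanding $\prod_{t\in T}(x_i^r-x_t^r)\int_0^{x_i}t^{mr}\,dt$ in powers of $x_i^r$ gives, up to the nonzero scalar $1/(mr+1)$, the expression
\[
\eta_k+\sum_{j<k} f_j\eta_j, \qquad f_j\in S .
\]
Hence $\eta_k$ lies in the span of $\eta_0,\dots,\eta_{k-1}$ and the derivations $\theta_C^{T,y^m}$ with $T$ minimal and $C\in\mathscr C(T)$. This is the point where using $P=y^m$ instead of $P=1$ lets me avoid non-minimal separators of size $k$.

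\textbf{Step 3: passing to descending chains.} By Lemma \ref{lem:descending chain}, each $\theta_C^{T,y^m}$ is generated by the chain derivations $\theta_{C_i}^{T_i,1}$ of $(T,C)$. Combining this with Step 2 and inducing on $k$ down to $\kappa(G)$ reduces every $\eta_k$ with $k\ge\kappa(G)$ to $\eta_0,\dots,\eta_{\kappa(G)-1}$ and the chain derivations, as claimed.

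\textbf{Main obstacle.} The delicate point is Lemma \ref{lem:descending chain} itself. Its induction produces derivations $\theta_{C_{i-1}}^{T_{i-1},J}$ with general $J$, where $T_{i-1}$ need not be a separator and $C_{i-1}$ need not be a component. So I must verify that the inductive step stays inside chain derivations with arbitrary polynomial $P$, and that the base case $i=1$ gives multiples of $\theta_{C_1}^{T_1,1}$. I would check this carefully, using $T_{i-1}\subseteq T_i\cup\{v_i\}$ together with the integral identity from the proof of Lemma \ref{lem:generators}, namely
\[
\prod_{u\in R}(x^r-x_u^r)\int_0^x P(t^r)\,dt=\int_0^x Q_{R,P}(t^r)\,dt .
\]
This identity absorbs the extra factors into a new polynomial $J$.
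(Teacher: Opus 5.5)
Your argument is correct, but it reaches a minimal separator by a different route than the paper.

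\textbf{The two routes.} The paper starts from a separator $T$ with $|T|=k$ and picks a minimal $T'\subseteq T$. It then rewrites $\sum_{C\in\mathscr{C}(T)}\theta_C^{T,1}$ as $\theta^{T',P}_{V\backslash T'}$, with $P$ absorbing the factors $\prod_{t\in T\backslash T'}(x_i^r-x_t^r)$. You instead fix one separator of size $\kappa(G)$, and you are right that it is automatically minimal. You then raise the degree through $P=y^{k-\kappa(G)}$. Both arguments arrive at a derivation $\theta^{T',P}_{V\backslash T'}$ with $T'$ minimal and leading term $\eta_k$, and both finish with Lemma \ref{lem:descending chain}.

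\textbf{What each buys.} Your version is uniform in $k$. A separator omits at least two vertices, so separators of size $k$ exist only for $k\le \ell-2$. The paper's induction tacitly needs one for every $k$. Hence it does not literally cover $\eta_{\ell-1}$, and it relies on Remark \ref{rem:reduction} for $k\ge \ell$. Your choice $P=y^{k-\kappa(G)}$ handles every $k\ge\kappa(G)$ with a single separator and no existence claim about separators of each size. The paper's version, for its part, shows explicitly how the derivations of non-minimal separators appearing in Theorem \ref{thm:reduce theta_k} are absorbed into those of minimal ones.

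\textbf{Two small points.}
\begin{enumerate}
\item In Step 2, state why the expansion is an $S$-combination of the $\eta_j$. The coefficients in powers of $x_i^r$ are, up to sign, elementary symmetric functions of the $x_t^r$ with $t\in T$, so they do not depend on $i$. Also $\prod_{t\in T}(x_i^r-x_t^r)=0$ for $i\in T$, so the sum over $V\backslash T$ equals the sum over all $i$.
\item Your ``main obstacle'' is not one. Lemma \ref{lem:descending chain} is already proved for arbitrary $P$, so you may simply cite it. The chain derivations $\theta^{T_i,P}_{C_i}$ are derivations of $\A_G$ because $N(C_i)\subseteq T_i$, even when $T_i$ is not a separator.
\end{enumerate}
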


\begin{proof} 
	The above theorem has proved the case $|T|=\kappa(G)$.
	For a separator $T$ with $|T| = k \ge \kappa(G)$,
	there is a minimal separator $T' \subseteq T$.
	Consider
	\begin{align*}
		\theta 
		= \sum_{C\in \mathscr{C}(T)} \theta_C^{T,1}
		& = \sum_{i\in V \backslash T} \prod_{t\in T} (x_i^r-x_t^r)\cdot x_i \partial_i \\
		& = \sum_{i\in V \backslash T'} \prod_{t\in T'} (x_i^r-x_t^r) \cdot \prod_{t\in T \backslash T'} (x_i^r-x_t^r) \cdot x_i \partial_i  \\
		& = \sum_{i\in V \backslash T'} \prod_{t\in T'} (x_i^r-x_t^r) \cdot  \int_0^{x_i} P(t^r) dt \partial_i  \\
		& = \theta_{V\backslash T'}^{T', P}.
	\end{align*}
	The entry $\prod_{t\in T} (x_i^r-x_t^r)\cdot x_i$ in $\theta_{V\backslash T'}^{T', P}$ 
	can be expressed by $f_0 x_i +f_1 x_i^{r+1}+\dots+ x_i^{kr+1}$.
	Similarly, we have
	$\theta_{V\backslash T'}^{T', P} - \sum_{i=0,1,\dots, k-1} f_i \eta_i = \eta_k$.
	Since $\theta_{V\backslash T'}^{T', P}$ is generated by 
	the derivations $\theta_{C_i}^{T_i, 1}$ 
	defined by descending chain $(T_i, C_i)$ of $(T', C')$ $(C'\in \mathscr{C}(T'))$,
	the proof is complete by induction on $k$.
\end{proof}

Based on Theorem  \ref{thm:generators}, Lemma \ref{lem:descending chain}, and Corollary \ref{cor:reduce theta_k} above, we obtain the following theorem.

\begin{theorem}
	\label{thm:main reduce}
	The vector fields $\eta_0, \eta_1, \dots, \eta_{\kappa(G)}$ 
	and derivations $\theta_{C_i}^{T_i,1}$ defined by a descending chain $(T_i,C_i)$ of $(T, C)$ 
	(for all minimal separators $T$ and $C\in \mathscr{C}(T)$) 
	form a generating set for derivation module $\D(\A_G)$ of arrangement $\A_G$ of type $G(r, p, \ell)$ where $p<r$. 
\end{theorem}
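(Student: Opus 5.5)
By Theorem \ref{thm:generators}, $\D(\A_G)$ is generated by $\mathscr{G}(G)$. The plan is to replace each element of $\mathscr{G}(G)$ by $S$-combinations of the proposed finite family: $\eta_0,\dots,\eta_{\kappa(G)}$ together with the derivations $\theta_{C_i}^{T_i,1}$ coming from descending chains of pairs $(T,C)$ with $T\in\mathscr{T}(G)$ and $C\in\mathscr{C}(T)$. The elements of $\mathscr{G}(G)$ are of two kinds, the standard fields $\eta_0,\dots,\eta_{\ell-1}$ and the separator fields $\theta_C^{T,P}$, and I treat them separately.

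\emph{Separator fields.} Fix a minimal separator $T$, a component $C\in\mathscr{C}(T)$ and $P\in S[y]$. Write $P=\sum_m p_m y^m$. Since $\theta_C^{T,P}$ is linear in $P$ over $S$, it is enough to treat $P=y^m$. Lemma \ref{lem:descending chain} then expresses $\theta_C^{T,y^m}$ in terms of the fields $\theta_{C_i}^{T_i,1}$ of a descending chain of $(T,C)$. The lemma holds for any ordering of $C$, so I fix one descending chain for each pair $(T,C)$. Because $G$ has finitely many minimal separators and each has finitely many components, the resulting family of $\theta_{C_i}^{T_i,1}$ is finite.

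\emph{Standard fields.} If $G$ is complete, then $\kappa(G)=\ell-1$ and nothing needs to be done. Otherwise, Corollary \ref{cor:reduce theta_k} shows that each $\eta_k$ with $k\ge\kappa(G)$ is generated by $\eta_0,\dots,\eta_{\kappa(G)-1}$ and the same descending-chain derivations. This covers $\eta_{\kappa(G)},\dots,\eta_{\ell-1}$. Keeping $\eta_{\kappa(G)}$ in the generating set is therefore harmless, though redundant when $G$ is non-complete.

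The step I expect to be the main obstacle is the induction in Corollary \ref{cor:reduce theta_k}. There the fields $\theta_{V\setminus T'}^{T',P}$ carry a nonconstant $P$, and one has to check that they really are generated by the descending-chain fields for $T'$. My plan is to write $\theta_{V\setminus T'}^{T',P}=\sum_{C'\in\mathscr{C}(T')}\theta_{C'}^{T',P}$ and apply Lemma \ref{lem:descending chain} to each summand. The one point needing care is that a separator $T$ with $|T|=k$ should exist for every $k$ with $\kappa(G)\le k\le \ell-2$. To arrange this, I would enlarge a minimum separator by vertices of a component of size at least two. For $k=\ell-1$, I would use Theorem \ref{thm:basis for complete graph} and Remark \ref{rem:reduction}, which already place $\eta_{\ell-1}$ and beyond in the module generated by lower-index $\eta$'s and separator fields. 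Combining the two reductions gives the stated generating set.
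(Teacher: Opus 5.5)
Your route is the paper's: Theorem \ref{thm:generators}, then Lemma \ref{lem:descending chain} for the fields $\theta_C^{T,P}$, then Corollary \ref{cor:reduce theta_k} for the fields $\eta_k$. You correctly notice that the Corollary's proof only covers those $k$ for which a separator with $k$ vertices exists. However, your patch for $k=\ell-1$ fails. Theorem \ref{thm:basis for complete graph} says $\eta_0,\dots,\eta_{\ell-1}$ is a \emph{basis} of $\D(\A_{K_\ell})$, so $\eta_{\ell-1}$ is \emph{not} an $S$-combination of $\eta_0,\dots,\eta_{\ell-2}$. Remark \ref{rem:reduction} only concerns $\eta_k$ with $k\ge \ell$, and it expresses them through a list that contains $\eta_{\ell-1}$ itself. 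Neither result involves separator fields. No separator has $\ell-1$ vertices, and for non-complete $G$ we have $\kappa(G)\le\ell-2$. So $\eta_{\ell-1}\in\mathscr G(G)$ is left unaccounted for in your argument.

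The missing idea is that separator fields with a \emph{nonconstant} $P$ produce the top $\eta$'s directly. This also makes separators of intermediate sizes unnecessary. Fix one minimum separator $T_0$ with $|T_0|=\kappa=\kappa(G)$. Write $\prod_{t\in T_0}(y-x_t^r)=\sum_{s=0}^{\kappa}e_s y^s$ with $e_\kappa=1$, and set $\eta'_k=\sum_i x_i^{kr+1}\partial_i=(kr+1)\eta_k$. The coefficients vanish for $i\in T_0$, so
\[
(mr+1)\,\theta^{T_0,y^m}_{V\setminus T_0}=\sum_{i=1}^{\ell}\prod_{t\in T_0}(x_i^r-x_t^r)\,x_i^{mr+1}\partial_i=\sum_{s=0}^{\kappa}e_s\,\eta'_{m+s}.
\]
Hence $\eta_{m+\kappa}$ lies in the module generated by $\eta_m,\dots,\eta_{m+\kappa-1}$ and the fields $\theta^{T_0,y^m}_C$ with $C\in\mathscr C(T_0)$. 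Induction on $k=m+\kappa$ then handles every $k\ge\kappa$, and Lemma \ref{lem:descending chain} converts each $\theta^{T_0,y^m}_C$ into descending-chain fields.

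Equivalently, the computation in the proof of Corollary \ref{cor:reduce theta_k} never uses that $T$ separates, only that $T$ contains a minimal separator $T'$. So for $k=\ell-1$ you may take $T=V\setminus\{v\}$ with $v\notin T'$.

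Your existence argument for $\kappa\le k\le\ell-2$ is also imprecise as phrased. For a star graph, every component of $G\setminus\{1\}$ is a single vertex, so there is no component of size at least two to enlarge from. The correct recipe is to keep two vertices from different components outside $T$. With the fix above this step is not needed anyway.
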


\begin{example}[Star graph]
	Let $m\geq2$, and let $G$ be a star graph on
	$V=\{1,2,\ldots,\ell \}$ with edge set
	$
	E=\bigl\{\{1,i\}\mid 2\leq i\leq\ell \bigr\}.
	$
	Consider the arrangement $\A_G$ of type $G(r,p,\ell)$, where $p<r$.
	Its defining polynomial is
	$
	Q(\A_G)
	=\left(\prod_{j=1}^{\ell}x_j\right)
	\left(\prod_{i=2}^{\ell}(x_i^r-x_1^r)\right).
	$
	The set $T=\{1\}$ is the unique minimal separator of $G$, and
	$
	\mathscr C_G(T)=\bigl\{\{2\},\{3\},\ldots,\{\ell \}\bigr\}.
	$
	For each $2\leq i\leq \ell$, the derivation corresponding to the component
	$\{i\}$ is
	$
	\theta^{\{1\},1}_{\{i\}}
	=x_i(x_i^r-x_1^r)\partial_i.
	$
	By Saito's criterion (Theorem 4.19, \cite{ot}),
	these derivations,
	together with
	$
	\eta_0=\sum_{i=1}^{\ell}x_i\partial_i,
	$
	form a basis of
	$\D(\A_G)$. 
	In this special case, the derivations
	$\eta_i$ $(i \ge 0)$ 
	and $\theta_C^{T, 1}$ (all minimal separators $T$) 
	are sufficient to generate $\D(\A_G)$.
\end{example}

The following two examples demonstrate that,
for general arrangements $\A_G$, 
the derivations
$\eta_i$ $(0 \le i \le \kappa(G))$ 
and $\theta_C^{T, 1}$ (all minimal separators $T$) 
are insufficient to generate $\D(\A_G)$
in Theorem \ref{thm:main}.

\begin{example}
	We use the same graph $G$ from Example 3.9 of \cite{l}.
	The arrangement $\A_G$ corresponding to the graph $G$ in Fig.\ref{fig:four} is defined by
	$x_1 x_2 x_3 x_4 (x_1^r - x_4^r ) (x_2^r - x_3^r ) (x_2^r - x_4^r ) (x_3^r - x_4^r )$.
	\begin{figure}[htbp]
		\centering
		\begin{tikzpicture}[scale=0.8]
			\draw[thick, black] (0,0) -- (0,2)  ; 
			\draw[thick, black] (0,2) -- (1.75,1)  ;
			\draw[thick, black] (0,0) -- (1.75,1)  ;
			\draw[thick, black] (1.75,1)  -- (3.5,1) ; 
			\node[above] at (3.5,1) {1};
			\node[above] at (0,2) {2};
			\node[below] at (0,0) {3};
			\node[above] at (1.75,1) {4};
			\filldraw[black] (0,0) circle (0.1);
			\filldraw[black] (0,2) circle (0.1);
			\filldraw[black] (1.75,1) circle (0.1);
			\filldraw[black] (3.5,1) circle (0.1);
		\end{tikzpicture}
		\caption{A graph on 4 vertices} 
		\label{fig:four}
	\end{figure}
	
	The derivations $\theta_{\{1\} }^{ \{4\} , 1 }, \theta_{\{2 , 3\} }^{ \{4\} , 1 }$ are given by the unique minimal separator $\{4\}$,
	and the derivation $\theta_{\{3\} }^{ \{2, 4\} , 1 }$ comes from a non-minimal separator $\{2, 4\}$.
	It is easy to check 
	$\D(\A_G)=\langle \eta_0, \theta_{\{1\} }^{ \{4\} , 1 }, \theta_{\{2 , 3\} }^{ \{4\} , 1 }, \theta_{\{3\} }^{ \{2, 4\} , 1 } \rangle$ by Saito's criterion (Theorem 4.19, \cite{ot}).
	Thus $\A_G$ is free.
	Note that $\eta_1 = x_4^r \eta_0 + \theta_{\{1\} }^{ \{4\} , 1 } + \theta_{\{2 , 3\} }^{ \{4\} , 1 }$.
	Clearly, $\D(\A_G)$ can not be generated by derivations $\eta_0, \eta_1, \theta_{\{1\} }^{ \{4\} , 1 }, \theta_{\{2 , 3\} }^{ \{4\} , 1 }$. 
\end{example}

\begin{example}
	 The arrangement
	 $\A_G$ corresponding to the graph $G$ in Fig.\ref{fig:five} is defined by
	 $
	 x_1x_2x_3x_4x_5
	 (x_1^r-x_2^r)(x_1^r-x_3^r)(x_2^r-x_3^r)
	 (x_1^r-x_4^r)(x_1^r-x_5^r)
	 (x_2^r-x_4^r)(x_2^r-x_5^r)(x_4^r-x_5^r).
	 $
	\begin{figure}[htbp]
		\centering
		\begin{tikzpicture}[scale=0.8]
			\draw[thick, black] (0,0) -- (0,2)  ; 
			\draw[thick, black] (0,0) -- (2,0)  ;
			\draw[thick, black] (2,0) -- (2,2)  ;
			\draw[thick, black] (0,2) -- (2,2)  ;
			\draw[thick, black] (0,0) -- (2,2)  ;
			\draw[thick, black] (2,0) -- (0,2)  ;
			\draw[thick, black] (3.5,1) -- (2,2)  ;
			\draw[thick, black] (3.5,1) -- (2,0)  ;
			\node[above] at (3.5,1) {3};
			\node[above] at (0,2) {4};
			\node[below] at (0,0) {5};
			\node[below] at (2,0) {2};
			\node[above] at (2,2) {1};
			\filldraw[black] (0,0) circle (0.1);
			\filldraw[black] (0,2) circle (0.1);
			\filldraw[black] (2,0) circle (0.1);
			\filldraw[black] (2,2) circle (0.1);
			\filldraw[black] (3.5,1) circle (0.1);
		\end{tikzpicture}
		\caption{A graph on 5 vertices} 
		\label{fig:five}
	\end{figure}
	
	The unique minimal separator $\{1,2\}$ gives exactly the two derivations
	$
	\theta^{\{1,2\},1}_{\{3\}},
	\theta^{\{1,2\},1}_{\{4,5\}}.
	$
	Thus the proposed generating module is
	$
	M=\langle\eta_0,\eta_1,\eta_2,\theta^{\{1,2\},1}_{\{3\}},\theta^{\{1,2\},1}_{\{4,5\}}\rangle.
	$
	Now $\{1,2,4\}$ is a non-minimal separator.
	Lemma \ref{lemma:derivation} gives the derivation 
	$\theta^{\{1,2,4\},1}_{\{5\}}\in \D(\A_G)$. 
	We show that
	$\theta^{\{1,2,4\},1}_{\{5\}} \notin M $.
	Suppose that
	\[
	\theta^{\{1,2,4\},1}_{\{5\}}=f_0\eta_0+f_1\eta_1+f_2\eta_2+g_1\theta^{\{1,2\},1}_{\{3\}}+g_2\theta^{\{1,2\},1}_{\{4,5\}},
	\]
	for some $f_0, f_1, f_2, g_1, g_2 \in S$.
	Put $z_i=x_i^r$ and $P(y)=f_0+f_1y+f_2y^2$. Comparing the coefficients
	at $\partial_1$ and $\partial_2$ gives
	$
	P(z_1)=P(z_2)=0,
	$
	and
	$
	P(y)=h(y-z_1)(y-z_2)
	$
	for some $h\in S$. 
	 The coefficient of $\partial_4$ in $\theta^{\{1,2,4\},1}_{\{5\}}$ is zero, whereas its
	 coefficient on the assumed right-hand side is
	 $
	 x_4(z_4-z_1)(z_4-z_2)(h+g_2).
	 $
	 Therefore $h+g_2=0$. It follows that the coefficient of $\partial_5$ on
	 the right-hand side is also zero:
	 $
	 x_5(z_5-z_1)(z_5-z_2)(h+g_2)=0.
	 $
	 On the other hand, the coefficient of $\partial_5$ in $\theta^{\{1,2,4\},1}_{\{5\}}$ is non-zero.
	Thus
	$
		\theta^{\{1,2,4\},1}_{\{5\}}
		\notin
		\left\langle
		\eta_0,\eta_1,\eta_2,
		\theta^{\{1,2\},1}_{\{3\}},
		\theta^{\{1,2\},1}_{\{4,5\}}
		\right\rangle.
	$
	Hence the derivations $\eta_i$, $0\leq i\leq\kappa(G)$, together with
	all derivations associated with minimal separators do not generate $\D(\A_G)$.
\end{example}

\section{Discussion}\label{sec:conjecture}

Given a simple graph $G$,
for $v\in V(G) $, 
define a vector field $\theta_v^{N(v)}$ by $\prod_{t\in N(v)} (x_v^r-x_t^r) \partial_v$.
It is easy to check $\theta_v^{N(v)} (x_i-\zeta x_j) \in (x_i-\zeta x_j)S $ for $\{i, j\} \in E(G)$. The authors \cite{FWY} have obtained a basis for  $\A_{K_\ell}$ of type $G(r, r, \ell)$ shown in the following.

\begin{theorem}
	\label{thm:conjecture} 
	$($$\mathrm{Theorem\ 3.4}$, \cite{FWY}$)$ 
	The vector fields $x_1^{r-1} x_2^{r-1} \dots x_\ell^{r-1} \cdot \eta_{-1}, \eta_0, \eta_1, \dots, \eta_{\ell-2}$ 
	form a basis for $\D(\A_{K_\ell})$ of type $G(r, r, \ell)$. 
\end{theorem}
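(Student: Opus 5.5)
The plan is to verify the statement with Saito's criterion (Theorem 4.19, \cite{ot}). For $p=r$ we have $Q(\A_{K_\ell})=\prod_{i<j}(x_i^r-x_j^r)$, and $|\A_{K_\ell}|=r\binom{\ell}{2}$.

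\textbf{Membership.} I first show that each listed field lies in $\D(\A_{K_\ell})$.
\begin{itemize}
\item For $\eta_k$ with $k\ge0$, the computation in Lemma~\ref{lemma:derivation} with $T=\emptyset$, $C=V$ and $P=y^k$ applies. It gives $\eta_k(x_i-\zeta x_j)\equiv\int_{\zeta x_j}^{x_i}t^{kr}\,dt\equiv0$ modulo $x_i-\zeta x_j$. That argument never used the coordinate hyperplanes.
\item For $\theta:=x_1^{r-1}\cdots x_\ell^{r-1}\eta_{-1}$, interpret $\eta_{-1}=\sum_i x_i^{1-r}\partial_i$. Then $\theta=\sum_i\prod_{j\ne i}x_j^{r-1}\,\partial_i$, which is a polynomial field.
\item Evaluate $\theta(x_i-\zeta x_j)=\prod_{k\ne i,j}x_k^{r-1}\,(x_j^{r-1}-\zeta x_i^{r-1})$. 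Modulo $x_i-\zeta x_j$ this becomes $x_j^{r-1}-\zeta^{r}x_j^{r-1}=0$, using $\zeta^r=1$.
\end{itemize}

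\textbf{Degree count.} The degrees are $\deg\theta=(r-1)(\ell-1)$ and $\deg\eta_k=kr+1$ for $0\le k\le\ell-2$. Their sum is
\[
(r-1)(\ell-1)+\sum_{k=0}^{\ell-2}(kr+1)=(r-1)(\ell-1)+(\ell-1)+r\binom{\ell-1}{2}=r(\ell-1)+r\binom{\ell-1}{2}=r\binom{\ell}{2},
\]
which equals $|\A_{K_\ell}|$.

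\textbf{Nondegeneracy.} Saito's criterion then reduces the claim to showing that the determinant $\Delta$ of the coefficient matrix is a nonzero scalar multiple of $Q(\A_{K_\ell})$. Each column of the matrix is divisible by $x_i$ up to units, with column entries
\[
\prod_{j\ne i}x_j^{r-1},\; x_i,\; x_i^{r+1},\;\dots,\;x_i^{(\ell-2)r+1}.
\]
Factor $x_i^{1-r}$ formally out of the first entry, writing $\prod_{j\ne i}x_j^{r-1}=X\,x_i^{1-r}$ with $X=\prod_j x_j^{r-1}$. Pull $X$ out of the first row and $x_i^{1-r}$ out of each column. What remains is a Vandermonde matrix in the variables $z_i=x_i^r$ with rows $1,z_i,\dots,z_i^{\ell-1}$. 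Hence
\[
\Delta=\pm X\prod_i x_i^{1-r}\prod_{i<j}(z_i-z_j)=\pm\prod_{i<j}(x_i^r-x_j^r)=\pm Q(\A_{K_\ell}).
\]

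The main obstacle is this determinant step. It requires justifying the formal manipulation with negative powers, which I would do by working in the localization $S[x_1^{-1},\dots,x_\ell^{-1}]$, where determinant identities persist. Alternatively, $\Delta$ is divisible by $Q(\A_{K_\ell})$ because every field is a derivation, it has the same degree, and it is nonzero by checking a single monomial. Saito's criterion then concludes that the fields form a basis.
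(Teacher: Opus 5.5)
Your proof is correct. The paper gives no proof of this statement; it quotes it from \cite{FWY}, where such bases are obtained through integral expressions. Your direct verification via Saito's criterion is therefore an independent, self-contained route.

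It also buys something: you avoid that machinery entirely. Membership reduces to two short congruences using $\zeta^r=1$, and you are right that the $\eta_k$ computation never uses the coordinate hyperplanes. The determinant reduces to a Vandermonde determinant in $z_i=x_i^r$. Reading $\eta_{-1}$ as $\sum_i x_i^{1-r}\partial_i$ is the right interpretation of the $\doteq$ normalization, since $\int_0^{x_i}t^{-r}\,dt$ is not literally defined. Nonzero scalars are irrelevant for the basis property.

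Three small remarks:
\begin{itemize}
\item The sentence ``each column of the matrix is divisible by $x_i$ up to units'' is false as written, because the entry $\prod_{j\ne i}x_j^{r-1}$ is not divisible by $x_i$. Nothing downstream uses it, so it should simply be deleted.
\item You can drop both the localization and the unexecuted ``single monomial'' fallback. Multiply column $i$ by $x_i^{r-1}$ for each $i$. This multiplies $\Delta$ by $X=\prod_j x_j^{r-1}$ and turns the rows into $X$ (constant across columns), $z_i, z_i^2,\dots,z_i^{\ell-1}$. Hence $X\Delta=\pm X\prod_{i<j}(z_i-z_j)$, up to the nonzero constants from normalizing the $\eta_k$, and you cancel $X$ in the domain $S$.
\item Once $\Delta\doteq Q(\A_{K_\ell})$ and membership are established, Theorem 4.19 of \cite{ot} already yields the basis. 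The degree count is then only a consistency check; it would be a needed input only for the ``independent plus degree sum'' form of Saito's criterion.
\end{itemize}
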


Note that $x_1^{r-1} x_2^{r-1} \dots x_\ell^{r-1}\cdot\eta_{-1} \notin \langle \eta_0, \eta_1, \dots, \eta_{\ell-1}  \rangle$.
By observation of derivations of $\A_G$ of type $G(r, r ,\ell)$,
we pose the following conjecture.

\begin{conjecture}
	Given a simple connected graph $G$.
	The vector fields $x_1^{r-1} x_2^{r-1} \dots x_\ell^{r-1}~\cdot~\eta_{-1}, \eta_0, \eta_1, \dots, \eta_{\kappa(G)}$,
	$\theta_v^{N(v)}$
	(for $v\in V(G) $), 
	and a finite set of vector fields $\theta_{C}^{T,1}$ 
	(for separators $T$ and $C\in \mathscr{C}(T)$) 
	form a generating set for derivation module $\D(\A_G)$ of arrangement $\A_G$ of type $G(r, r, \ell)$. 
\end{conjecture}



\section*{Acknowledgments}
The authors thank Ye Liu for helpful discussions and the reviewers for their valuable comments and suggestions.
This research was funded by National Natural Science Foundation of China (No. 12201029).

\section*{Declarations}

\noindent \textbf{Competing Interests} The authors have no competing interests to declare.

\end{document}